\numberwithin{equation}{section}
\newtheorem{theorem}{Theorem}
\newtheorem{lemma}{Lemma}
\newtheorem{proposition}{Proposition}
\newtheorem{remark}{Remark}
\newcommand*{\C}{\mathbb{C}}%.............................C
\newcommand*{\R}{\mathbb{R}}%.............................R
\newcommand*{\Q}{\mathbb{Q}}%.............................Q
\newcommand*{\Z}{\mathbb{Z}}%.............................Z
\newcommand{\comment}[1]{}
\title[Detecting zeros of Dirichlet $L$-functions]%
       {Detecting zeros of Dirichlet $L$-functions \\ via the Riemann zeta-function} 
\author[M. Suzuki]{Masatoshi Suzuki}
\subjclass[]{
11M26 % - Nonreal zeros of  \zeta(s)  and  L(s,\chi) ; Riemann and other hypotheses
}
\keywords{Riemann Hypothesis, Generalized Riemann Hypothesis, 
vertical distribution of zeros
}
\begin{abstract}
Assuming the Riemann hypothesis, 
we show that a certain vertical distribution of 
the nontrivial zeros of the Riemann zeta-function is equivalent to 
the generalized Riemann hypothesis for Dirichlet $L$-functions. 
Furthermore, under both the Riemann hypothesis and the generalized Riemann hypothesis, 
we show that 
the nontrivial zeros of Dirichlet $L$-functions can be detected 
through those of the Riemann zeta-function.
\end{abstract}
\begin{document}
%..........................................

\section{Introduction}

We use the Landau symbols $O$ and $o$, as well as the Vinogradov symbol $\ll$, in their usual meanings. 
We denote the nontrivial zeros of the Riemann zeta-function $\zeta(s)$ by 
$\rho = \beta + i\gamma$ with $\beta, \gamma \in \R$. 
The Riemann Hypothesis (the RH) asserts that all such zeros satisfy $\beta = 1/2$. 
Similarly, the Generalized Riemann Hypothesis (the GRH) for a Dirichlet $L$-function $L(s,\chi)$ 
asserts that all nontrivial zeros 
$\rho_\chi = \beta_\chi + i\gamma_\chi$ satisfy $\beta_\chi = 1/2$. 
(For the definition of the nontrivial zeros of $L(s,\chi)$, 
see the review in the first paragraph of Section~\ref{section_1st_method}) 

Assuming RH, it is known that the distribution of the imaginary parts of the nontrivial zeros of $\zeta(s)$ contains information about the zeros of Dirichlet $L$-functions. 
This connection was first made precise by the Linnik--Sprind\v{z}uk theorem, 
which asserts that the GRH for all Dirichlet $L$-functions $L(s,\chi)$ 
associated with Dirichlet characters $\chi \bmod q$ $(\geq 3)$ 
is equivalent to the asymptotic formula
\begin{equation} \label{EQ_101}
\sum_{\gamma} e\left(\frac{\gamma}{2\pi} \log\frac{|\gamma|}{e}\right)
e^{-\frac{\pi}{2}|\gamma|}
\left( \frac{1}{x} + 2\pi i \frac{a}{q} \right)^{-1/2 - i\gamma}
= -\frac{\mu(q)}{\sqrt{2\pi} \varphi(q)}\, x + O(x^{1/2 + \varepsilon})
\end{equation}
as $x \to \infty$, for any $\varepsilon > 0$ and 
any integer $a$ with $1 \leq |a| \leq q/2$ and $(a, q) = 1$.
Here, the sum is taken over all imaginary parts $\gamma$ of the nontrivial zeros 
$\rho = 1/2 + i\gamma$ of $\zeta(s)$, counted with multiplicity, 
$\mu$ denotes the M\"obius function, $\varphi$ denotes the Euler totient function,  
and, as usual, 
\[
e(x) := \exp(2\pi i x).
\]
This result was established by Sprind\v{z}uk~\cite{Sp75, Sp76}, 
building on an idea of Linnik~\cite{Li47}, 
and was later generalized by Fujii~\cite{Fu88a, Fu88b} and Banks~\cite{Ba23, Ba24a, Ba24b}.  
Suzuki~\cite{Su04} extended it to functions in the Selberg class, 
and Kaczorowski and Perelli~\cite{KP08} 
presented a different form of the Linnik--Sprind\v{z}uk phenomenon 
within the Selberg class.

On the other hand, Fujii~\cite{Fu88b, Fu92} proved a result similar to 
\eqref{EQ_101} by replacing the infinite sum on the left-hand side with 
a finite one. Specifically, he showed that 
the GRH  for all Dirichlet $L$-functions associated with Dirichlet characters 
modulo $q$ $(\geq 3)$ is equivalent to the asymptotic formula
\begin{equation} \label{EQ_102}
\sum_{0<\gamma\leq x} e\left(\frac{\gamma}{2\pi} \log \frac{\gamma}{2\pi (a/q) e}\right)
= - \frac{e^{\pi i/4}\mu(q)}{2\pi\varphi(q)\sqrt{a/q}}\,x
+ O\left(x^{1/2+\varepsilon}\right)
\end{equation}
as $x \to \infty$, for any integer $a$ with $1 \leq a \leq q$ and $(a,q) = 1$ 
(\cite[Corollary~3]{Fu92}). 

The main results of this paper extend the Linnik--Sprind\v{z}uk theorem and Fujii's result  
in the sense that the vertical distribution of the nontrivial zeros of $\zeta(s)$ 
makes it possible to detect the ordinates of the nontrivial zeros of Dirichlet $L$-functions. 
To state these results, we define
\begin{equation} \label{EQ_103}
Z(x;t,\alpha)
:= 
\sum_{0<\gamma - t \leq 2\pi \alpha x} 
\frac{1}{\sqrt{\gamma-t}}\,
e\!\left(\frac{\gamma-t}{2\pi}\log \frac{\gamma-t}{2\pi \alpha e} \right)  
\left(1 - \frac{1}{\log x}\log\frac{\gamma-t}{2\pi \alpha}\right)
\end{equation}
assuming RH, 
where the sum is taken over all imaginary parts $\gamma$ 
of the nontrivial zeros $\rho = 1/2 + i\gamma$ of $\zeta(s)$, counted with multiplicity.
Note that the mollifier 
$1-(\log x)^{-1}\log((\gamma-t)/(2\pi \alpha))$ 
in definition~\eqref{EQ_103} 
is consistent with the summation range $0 < \gamma - t \leq 2\pi \alpha x$.

\begin{theorem} \label{thm_1} 
Assume RH. 
Let $t$ be a real number that is not the ordinate of any nontrivial zero of $\zeta(s)$, 
and let $q \geq 3$ be a positive integer. 
Let $m(t,\chi)$ denote the order of the Dirichlet $L$-function $L(s,\chi)$ at the point $s = 1/2 + it$. 
Then the GRH for all Dirichlet $L$-functions $L(s,\chi)$ associated with 
Dirichlet characters $\chi \bmod q$ 
is equivalent to the asymptotic formula
\begin{equation} \label{EQ_104}
Z(x;t,a/q_1)
 = -\frac{e^{\pi i/4}\mu(q_1)}{\sqrt{2\pi}\,\varphi(q_1)} \cdot 
   \frac{1}{(1/2 - it)^2} \cdot \frac{x^{1/2 - it}}{\log x}
   + O(x^\varepsilon) 
\end{equation}
as $x \to \infty$, for any $\varepsilon>0$ and 
for any integer $a$ with $1 \leq a < q_1$, $(a,q_1) = 1$, and $q_1 \mid q$,  
where the implied constant may depend on $t$, $q_1$, and $\varepsilon$.

Moreover, assuming the GRH for all $L(s,\chi)$ 
with $\chi \bmod q$, we obtain the formula
\begin{equation} \label{EQ_105} 
\aligned  
 Z(x;t,a/q_1) 
& = -\frac{e^{\pi i/4}\mu(q_1)}{\sqrt{2\pi}\,\varphi(q_1)} \cdot 
    \frac{1}{(1/2 - it)^2} \cdot \frac{x^{1/2 - it}}{\log x}\\
& \quad +\frac{e^{\pi i/4}}{2\sqrt{2\pi}\,\varphi(q_1)} 
    \sum_{\chi \bmod q_1} \overline{\tau(\chi)}\,\chi(a)\,m(t, \chi)
    \log x 
    + O(1)
\endaligned 
\end{equation}
as $x \to \infty$, for any integer $a$ with $1 \leq a < q_1$, $(a,q_1) = 1$, and $q_1 \mid q$,  
where the sum $\sum_{\chi \bmod q_1}$ is taken over all 
Dirichlet characters $\chi\bmod q_1$, 
$\tau(\chi)$ denotes the Gauss sum for $\chi$, and  
the implied constant may depend on $t$ and $q_1$. 
\end{theorem}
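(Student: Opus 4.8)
The plan is to convert $Z(x;t,\alpha)$ into a sum of $\Lambda(n)$ twisted by an additive character, to split that over Dirichlet characters $\bmod q_1$ by Gauss sums, and then to read off the main and error terms from the analytic behaviour of $L'/L(\cdot,\chi)$.

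\emph{Step 1: an explicit formula (the core step).} I would first prove, under RH, the identity
\[
Z(x;t,\alpha)=-\frac{e^{\pi i/4}}{\sqrt{2\pi}}\sum_{n\le x}\frac{\Lambda(n)}{\sqrt{n}}\,n^{-it}\,e(-n\alpha)\left(1-\frac{\log n}{\log x}\right)+O(1),
\]
with implied constant depending on $t$ and $\alpha$. This comes from the argument principle applied to $\Phi(s)\tfrac{\zeta'}{\zeta}(s)$ over a thin rectangle around the critical line, between the heights $t+\epsilon_0$ and a height just above $t+2\pi\alpha x$, where $\Phi$ is the holomorphic function with $\Phi(1/2+i\gamma)$ equal to the summand of \eqref{EQ_103}; by Stirling's formula $\Phi(s)$ is essentially $(2\pi\alpha)^{-(s-1/2-it)}$ times the archimedean factor of $\zeta$ at a shifted argument, which is precisely why the weight in $Z$ matches the zeta-function. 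Using the functional equation one writes $\tfrac{\zeta'}{\zeta}(s)=\tfrac{\chi'}{\chi}(s)-\tfrac{\zeta'}{\zeta}(1-s)$: the $\tfrac{\chi'}{\chi}$-piece is a smooth oscillatory integral which stationary phase shows is $O(1)$ — here the mollifier is essential, as it kills the contribution of the top edge $\Im s=t+2\pi\alpha x$ — while shifting the contour of the $\tfrac{\zeta'}{\zeta}(1-s)$-piece into $\Re s<0$ and expanding $\tfrac{\zeta'}{\zeta}(1-s)=-\sum_n\Lambda(n)n^{s-1}$ produces exactly the prime sum, the cutoff $n\le x$ and the weight $1-\log n/\log x$ arising from the resonance $\gamma-t\approx 2\pi\alpha n$. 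When $t=0$ this identity, and hence \eqref{EQ_104}, also follows directly from Fujii's \eqref{EQ_102} by partial summation against $y^{-1/2}(1-\log(y/2\pi\alpha)/\log x)$, the boundary terms vanishing because this weight vanishes at $y=2\pi\alpha x$ while there are no zeros near $0$; this is a useful check on the constants.

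\emph{Steps 2--3: passing to $L$-functions (the ``$\Longrightarrow$'' directions).} Specialising $\alpha=a/q_1$ and inserting $e(-na/q_1)=\varphi(q_1)^{-1}\sum_{\chi\bmod q_1}\overline{\tau(\chi)}\chi(a)\chi(n)$ for $(n,q_1)=1$ (the terms with $(n,q_1)>1$ contribute $O(1)$), Step 1 gives $Z(x;t,a/q_1)=-\tfrac{e^{\pi i/4}}{\sqrt{2\pi}\,\varphi(q_1)}\sum_{\chi\bmod q_1}\overline{\tau(\chi)}\chi(a)\,T(x;t,\chi)+O(1)$, where $T(x;t,\chi)=\sum_{n\le x}\Lambda(n)\chi(n)n^{-it}n^{-1/2}(1-\log n/\log x)$. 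By Perron's formula $T(x;t,\chi)=\tfrac{1}{2\pi i\log x}\int_{(c)}\bigl(-\tfrac{L'}{L}(s+\tfrac12+it,\chi)\bigr)\tfrac{x^s}{s^2}\,ds$ for $c>1/2$. Assuming the GRH for all $L(\cdot,\chi)\bmod q$, move the contour to $\Re s=-\delta$; the relevant residues are: at $s=\tfrac12-it$, present only for $\chi=\chi_0$, from the pole of $\zeta$ at $1$, contributing $x^{1/2-it}/((1/2-it)^2\log x)$; at $s=0$, where the double pole of $x^s/s^2$ meets the pole of $-L'/L$ of residue $-m(t,\chi)$, contributing $-\tfrac12 m(t,\chi)\log x+O(1)$; and at the points $s=i(\gamma_\chi-t)$ from the remaining zeros $\tfrac12+i\gamma_\chi$ of $L(\cdot,\chi)$, whose total contribution is $O(1)$ since $\sum_{\gamma_\chi}(\gamma_\chi-t)^{-2}<\infty$. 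The leftover integral on $\Re s=-\delta$ is $\ll x^{-\delta}$ by the conditional bounds for $L'/L$ on the zero-free line $\Re s=\tfrac12-\delta$. Hence $T(x;t,\chi)$ equals $x^{1/2-it}/((1/2-it)^2\log x)$ (only for $\chi_0$) minus $\tfrac12 m(t,\chi)\log x$ plus $O(1)$; substituting and using $\overline{\tau(\chi_0)}\chi_0(a)=\mu(q_1)$ yields \eqref{EQ_105}, and a fortiori \eqref{EQ_104}, since $\log x$ and $O(1)$ are $O(x^\varepsilon)$.

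\emph{Step 4: the converse, and the main obstacle.} For ``\eqref{EQ_104} $\Rightarrow$ GRH'', suppose $\Theta:=\sup\{\Re\rho_\chi:L(\rho_\chi,\chi)=0,\ \chi\bmod q\}>\tfrac12$. Shifting the contour in Step 3 only to $\Re s=-(\Theta-\tfrac12)-\delta$ shows that $Z(x;t,a/q_1)$, beyond its claimed main term, picks up the contribution of the zeros of the $L(\cdot,\chi)\bmod q$ with real part near $\Theta$, of size $\asymp x^{\Theta-1/2}/\log x$ for a suitable $a$; since $x^{\Theta-1/2}/\log x$ is not $O(x^\varepsilon)$ for $\varepsilon<\Theta-\tfrac12$, this contradicts \eqref{EQ_104} — the possibility that the contributions of the rightmost zeros cancel being excluded exactly as in the proof of the Linnik--Sprind\v{z}uk theorem and of Fujii's \eqref{EQ_102}, by letting $a$ run through the residues mod $q_1$ together with a short averaging in $x$ that isolates a single zero. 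The main obstacle throughout is Step 1: obtaining the explicit formula with error only $O(1)$ (not $O(x^{1/2+\varepsilon})$ as in \eqref{EQ_102}) and with constants uniform in $x$. The difficulties are the merely $|\gamma-t|^{-1/2}$ decay of the weight, which makes $Z$ a delicate sum controlled by cancellation rather than absolute convergence; the control of $\Phi$, which has a branch point at $s=1/2+it$, along the contour shifts; and verifying that the $\tfrac{\chi'}{\chi}$-integral and every boundary and tail term really are $O(1)$ — for which the precise mollifier $1-(\log x)^{-1}\log((\gamma-t)/2\pi\alpha)$, vanishing at the top of the summation range, is exactly the right device.
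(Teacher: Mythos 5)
Your overall architecture is sound and, in Steps 2--3, essentially coincides with the paper's: the Gauss-sum expansion of $e(-na/q_1)$, the weighted Perron kernel $x^s/s^2$ (which is exactly the integral formula \eqref{EQ_207} read backwards), and the residue bookkeeping at $s=\tfrac12-it$ and at the order-$m(t,\chi)$ pole at $s=0$ reproduce \eqref{EQ_105} with the correct constants. The genuine gap is your Step 1. The identity $Z(x;t,\alpha)=-\tfrac{e^{\pi i/4}}{\sqrt{2\pi}}\Psi(x;1/2+it,\alpha)+O(1)$ (the paper's Proposition~\ref{prop_3}) is the entire analytic content of the theorem --- it is what improves Fujii's $O(x^{1/2+\varepsilon})$ in \eqref{EQ_102} to an error that does not swamp the $\log x$ term carrying $m(t,\chi)$ --- and you do not prove it: you propose an argument-principle/functional-equation/stationary-phase route and then, in Step 4, list as ``the main obstacle'' precisely the estimates that would have to be carried out (the $|\gamma-t|^{-1/2}$ weight, the branch point of your interpolating function $\Phi$ at $s=1/2+it$, the boundary and tail terms near the top of the contour). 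Those estimates are exactly what the paper spends Sections~\ref{section_3}--\ref{section_4} on: it goes in the opposite direction (from the prime sum $\Psi$ to the zeros, via the truncated explicit formula and the exponential integrals $G(x;\alpha,z)$), and the delicate points are the asymptotics \eqref{EQ_308}--\eqref{EQ_309} extracting the summand of \eqref{EQ_103} from $\Gamma(1+i(\gamma-t))$, and the stationary-phase bounds \eqref{EQ_311} controlling the zeros with $\gamma-t$ near and beyond $2\pi\alpha x$, where the saddle point sits at the edge of the range. Asserting ``stationary phase shows it is $O(1)$'' at this point is assuming the conclusion; as written the proof of both \eqref{EQ_104} and \eqref{EQ_105} is incomplete.

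Two further remarks on Step 4. First, the zero-detection argument (contour shift to $\Re s=-(\Theta-\tfrac12)-\delta$ plus ``short averaging in $x$'' to rule out cancellation among the rightmost zeros) is not needed and is not spelled out; the Perron representation you already wrote in Step 3 gives the converse cleanly: if $\sum_a\bar\chi(a)$ applied to \eqref{EQ_104} yields $\overline{\tau(\chi)}\,\Psi(x;1/2+it,\chi)\ll x^\varepsilon$, then the Mellin integral \eqref{EQ_207} converges for $\Re z>1/2$ and furnishes the analytic continuation of $(z-1/2)^{-2}\tfrac{L'}{L}(z+it,\chi)$ there, so $L(s,\chi)$ has no zeros with real part $>1/2$; this is the paper's route via Proposition~\ref{prop_1}. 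Second, you never address imprimitive characters with $\tau(\chi)=0$, for which the orthogonality step is vacuous; this is the reason the theorem quantifies over all $q_1\mid q$ (one applies the argument with $q_1$ equal to the conductor of the inducing primitive character), and your converse must invoke that reduction explicitly.
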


\begin{remark}
By the precise definition of GRH stated 
in the first paragraph of Section~\ref{section_1st_method}, 
if the GRH holds for all $\chi \bmod q$, 
then it also holds for all $\chi \bmod q_1$ 
for every $q_1 \mid q$.
\end{remark}

If the aim is to establish an equivalent condition to GRH, 
it suffices to consider the case $t=0$ in Theorem \ref{thm_1}.  
However, allowing $t$ to vary (almost) arbitrarily 
enables 
the detection of the nontrivial zeros of Dirichlet $L$-functions, 
as in the second assertion of Theorem \ref{thm_1}.
Using the orthogonality of Dirichlet characters, 
Theorem~\ref{thm_1} can be decomposed into statements   
concerning individual Dirichlet $L$-functions.

\begin{theorem} \label{thm_0724}
Assume RH, and let $t$ be a real number that is not the ordinate of any nontrivial zero of $\zeta(s)$. 
Let $\chi$ be a non-principal 
Dirichlet character modulo $q$ ($\geq 3$) such that $\tau(\chi) \ne 0$, 
and let $m(t,\chi)$ denote the order of 
the Dirichlet $L$-function $L(s,\chi)$ at $s = 1/2 + it$. 
Then the GRH for $L(s,\chi)$ is equivalent to the bound
\begin{equation} \label{EQ_106}
\sum_{a=1}^{q} \overline{\chi}(a) Z(x;t,a/q) = O(x^\varepsilon) 
\end{equation}
as $x \to \infty$, for any $\varepsilon > 0$, 
where the implied constant may depend on $t$, $q$, and $\varepsilon$.

Moreover, assuming the GRH for $L(s,\chi)$, we have 
\begin{equation} \label{EQ_107}
\sum_{a=1}^{q} \overline{\chi}(a)Z(x;t,a/q) 
 = \frac{e^{\pi i/4}\,\overline{\tau(\chi)}}{2\sqrt{2\pi}} \, m(t, \chi) \log x
+ O(1)
\end{equation}
as $x \to \infty$, 
where the implied constant may depend on $t$ and $q$. 

On the other hand, for the principal character $\chi_0 \bmod q$, 
\begin{equation} \label{EQ_108}
\begin{aligned}
\sum_{a=1}^{q} \chi_0(a)Z(x;t,a/q) 
& =  -\frac{e^{\pi i/4}\mu(q)}{\sqrt{2\pi}}\cdot 
 \frac{1}{(1/2 - it)^2} \cdot \frac{x^{1/2 - it}}{\log x} \\ 
& \quad + A_0(t,a/q)
+ O\left( \frac{1}{\log x} \right)
\end{aligned}
\end{equation}
as $x \to \infty$ for some constant $A_0(t,a/q)$, 
where the implied constant may depend on $t$ and $q$. 
\end{theorem}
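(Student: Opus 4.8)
The plan is to deduce Theorem~\ref{thm_0724} from Theorem~\ref{thm_1} by finite Fourier analysis on $(\Z/q\Z)^\times$, together with a one-step refinement of the asymptotic expansion in the principal-character case.

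First, observe that a Dirichlet character $\chi\bmod q$ vanishes on integers not coprime to $q$, so the sum $\sum_{a=1}^{q}\overline{\chi}(a)\,Z(x;t,a/q)$ really runs over $a$ with $(a,q)=1$; for such $a$ the fraction $a/q$ is already in lowest terms, and Theorem~\ref{thm_1} applies with $q_1=q$. Substituting \eqref{EQ_105} and interchanging the two finite sums, the leading term carries the factor $\sum_{(a,q)=1}\overline{\chi}(a)$, which equals $\varphi(q)$ when $\chi=\chi_0$ and $0$ otherwise, while the $\log x$-term carries, for each $\psi\bmod q$, the factor $\sum_{(a,q)=1}\overline{\chi}(a)\psi(a)=\varphi(q)\,\mathbf 1_{\psi=\chi}$, which collapses the inner sum to the single index $\psi=\chi$. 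For non-principal $\chi$ this yields \eqref{EQ_107} directly. For $\chi=\chi_0$ one uses $\tau(\chi_0)=\mu(q)$ and the fact that $m(t,\chi_0)=0$: since $L(s,\chi_0)=\zeta(s)\prod_{p\mid q}(1-p^{-s})$ and neither $\zeta(1/2+it)$ nor any factor $1-p^{-1/2-it}$ vanishes for the permitted values of $t$, the $\log x$-term drops and only the $x^{1/2-it}/\log x$-term remains, which is the shape asserted in \eqref{EQ_108}.

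For the equivalence in the non-principal case, the implication GRH$(\chi)\Rightarrow$\eqref{EQ_106} is immediate from \eqref{EQ_107} because $\log x\ll_\varepsilon x^\varepsilon$ — but \eqref{EQ_107} as obtained above presupposed the GRH for all $\chi\bmod q$, so one must check that the GRH for $L(s,\chi)$ alone already governs $\sum_{a=1}^{q}\overline{\chi}(a)\,Z(x;t,a/q)$. This is the point at which we would re-run the argument behind Theorem~\ref{thm_1} with the character parameter kept explicit: that argument proceeds through an explicit-formula expression for $Z(x;t,a/q)$, and after pairing with $\overline{\chi}(a)$ and applying orthogonality every contribution indexed by $\psi\ne\chi$ is annihilated, so that — up to $O(x^\varepsilon)$ under RH — the resulting quantity is a mollified, truncated sum over the nontrivial zeros of $L(s,\chi)$ only. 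Hence GRH$(\chi)$ yields \eqref{EQ_107}, and conversely, if $L(s,\chi)$ had a nontrivial zero with real part $\ne 1/2$, the functional equation would produce one with real part $>1/2$, and via the same explicit-formula identity this zero would inject into $\sum_{a=1}^{q}\overline{\chi}(a)\,Z(x;t,a/q)$ an oscillating term of size a positive power of $x$, contradicting the $O(x^\varepsilon)$ bound assumed for every $\varepsilon$. This gives \eqref{EQ_106}$\iff$GRH$(\chi)$.

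Finally, for the principal character no hypothesis beyond RH is used, since the nontrivial zeros of $L(s,\chi_0)$ are exactly those of $\zeta(s)$ and the finitely many extra Euler factors contribute only zeros on $\Re s=0$. Because the $\log x$-order term has vanished, the expansion of $\sum_{a=1}^{q}\chi_0(a)\,Z(x;t,a/q)$ in descending powers of $\log x$ — the powers being generated by the mollifier $1-(\log x)^{-1}\log((\gamma-t)/(2\pi\alpha))$ in \eqref{EQ_103} — continues past the $x^{1/2-it}/\log x$-term to a constant term and an error $O(1/\log x)$; carrying the underlying integral (or summatory) asymptotics one order further than was needed for \eqref{EQ_105} therefore gives \eqref{EQ_108}. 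The main obstacle will be this converse half of the equivalence: in the presence of the truncation $0<\gamma-t\le 2\pi(a/q)x$ and of the mollifier, one must show quantitatively that an off-line zero of $L(s,\chi)$ forces an unbounded — indeed power-sized — contribution that is not cancelled by the remaining terms, and this lower bound is the technical heart of the proof of Theorem~\ref{thm_1}, which we would invoke with the character variable made explicit.
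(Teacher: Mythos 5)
Your orthogonality computation (collapsing $\sum_{a}\overline{\chi}(a)$ against \eqref{EQ_105}) reproduces the easy direction correctly, and you rightly notice that this only yields \eqref{EQ_107} under GRH for \emph{all} characters mod $q$, whereas the theorem assumes GRH for the single $L(s,\chi)$. But the way you propose to close that gap is where the proposal breaks down. You defer everything to ``re-running the argument behind Theorem~\ref{thm_1} with the character parameter kept explicit,'' and for the crucial converse direction (\eqref{EQ_106} $\Rightarrow$ GRH for $L(s,\chi)$) your sketch is: an off-line zero would ``inject an oscillating term of size a positive power of $x$ that is not cancelled by the remaining terms.'' That non-cancellation claim is precisely the statement that needs proof, and no argument of this shape is given (nor is it how one can argue directly: infinitely many zero terms of comparable size could in principle conspire, so isolating one exceptional zero does not yield a lower bound). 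The paper's proof does not need any such lower bound. Under RH alone, Proposition~\ref{prop_3} gives \eqref{EQ_409}, i.e. $\sum_{a}\overline{\chi}(a)Z(x;t,a/q)$ equals $-e^{\pi i/4}(2\pi)^{-1/2}$ times $\sum_a\overline{\chi}(a)\Psi(x;1/2+it,a/q)$ up to $O(1/\log x)$, and by the Gauss-sum identity \eqref{EQ_203}/\eqref{EQ_206} this is $\overline{\tau(\chi)}\,\Psi(x;1/2+it,\chi)$ up to bounded terms; the passage between the bound $\Psi(x;1/2+it,\chi)\ll x^{\varepsilon}$ and GRH for $L(s,\chi)$ is then made \emph{in both directions} through the Mellin-type integral identity \eqref{EQ_207}, a Landau-type analyticity argument that completely sidesteps cancellation questions among zeros. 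None of these objects ($\Psi(x;s,\alpha)$, $\Psi(x;s,\chi)$, the identity \eqref{EQ_207}) appear in your proposal, so the equivalence in the first part is genuinely not established.

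The same issue affects the two asymptotic statements. For \eqref{EQ_107} under GRH for the single $\chi$, the paper uses the explicit formula \eqref{EQ_209} for $\Psi(x;1/2+it,\chi)$ itself, where GRH for $\chi$ alone makes the zero sum $O(1/\log x)$ and produces the $-\tfrac{1}{2}m(t,\chi)\log x$ term; deducing it from the statement of Theorem~\ref{thm_1} is not possible, as you yourself observe. For \eqref{EQ_108}, the asserted precision (an explicit constant term plus $O(1/\log x)$, sharper than the $O(1)$ in \eqref{EQ_105}) again comes from the explicit formulas \eqref{EQ_210}--\eqref{EQ_211} for the principal-character prime sum under RH; your phrase ``carrying the underlying asymptotics one order further'' names the task but does not perform it. In short, the proposal identifies the right Fourier-analytic reduction but omits the actual engine of the proof — the two evaluations of the prime sum $\Psi$ and the integral formula \eqref{EQ_207} — and replaces the hardest step with an unproved (and, as stated, unworkable) non-cancellation heuristic.
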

\begin{remark} 
If $\chi \bmod q$ is primitive, then $\tau(\chi)\not=0$, since $|\tau(\chi)|=\sqrt{q}$. 
If the imprimitive character $\chi \bmod q$ is induced by a primitive character $\chi^\ast \bmod q^\ast$, then
\[
\tau(\chi) = \mu(q/q^\ast) \chi^\ast(q/q^\ast)\tau(\chi^\ast)
\]
(\cite[Theorem 9.10]{MV07}). 
Hence, if $q/q^\ast$ is square-free and coprime to $q^\ast$, we have $\tau(\chi) \ne 0$.
\end{remark}
\begin{figure}
 \begin{minipage}{15cm}
 \centering
\includegraphics[width=10cm]{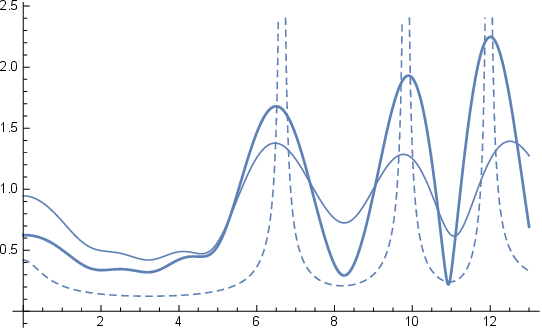}
\caption{Plot of $\vert \sum_{a=1}^{5}\bar{\chi}(a) Z(100;t,a/5) \vert$ in the range $0 \leq t \leq 13$ 
for the real primitive character $\chi \bmod 5$. 
The thin solid line represents $\vert (2\pi)^{-1/2} \sqrt{5} 
\sum_{a=1}^{5}\bar{\chi}(a) \Psi(100;1/2+it,a/5) \vert$, 
and the thin dashed line represents $\vert L'/L(1/2+it,\chi) \vert$. 
(cf. Proposition \ref{prop_3}) 
}
\label{fig_01}
  \end{minipage} 
\end{figure}

The relations \eqref{EQ_105}, \eqref{EQ_107}, and \eqref{EQ_108} 
can be interpreted as stating that the \emph{local} distribution of the nontrivial zeros 
of the Riemann zeta-function in the interval $[t, t+2\pi \alpha x]$ 
encodes information about the distribution of the nontrivial zeros of Dirichlet $L$-functions 
in the same interval.
Figure~\ref{fig_01} shows a plot of the absolute value of the left-hand side of~\eqref{EQ_107} for $x = 100$ and $0 \leq t \leq 13$, in the case of the real primitive Dirichlet character $\chi \bmod 5$. We observe that the graph exhibits peaks at the ordinates of the nontrivial zeros of $L(s, \chi)$, as expected from \eqref{EQ_107}. 

To extend Theorems \ref{thm_1} and \ref{thm_0724} to the case $t = \gamma$, the statements of the results become more complicated, and the proofs require somewhat more involved calculations than those presented later. Therefore, we do not address this extension in this paper.

On the other hand, as we outline in Section~\ref{section_6},  
it is possible to interchange the roles of  
the Riemann zeta-function and Dirichlet $L$-functions in \eqref{EQ_107}.  
That is, one can detect the nontrivial zeros of the Riemann zeta-function  
through those of Dirichlet $L$-functions.  
In this way, the nontrivial zeros of the Riemann zeta-function and  
those of Dirichlet $L$-functions are in a reciprocal relation.  
\medskip

A result similar to the first half of Theorem~\ref{thm_0724} 
was already implicitly stated in Fujii~\cite{Fu92}. In fact, in the proof of Theorem 2 in~\cite{Fu88b}, 
he discusses a reduction of~\eqref{EQ_102} to individual $L$-functions. 

Fujii also clarified that rationals and irrationals play distinct roles in the vertical distribution of the nontrivial zeros of $\zeta(s)$ by proving that
\[
\aligned 
\lim_{T\to\infty} & \frac{1}{T}
\sum_{0<\gamma\leq T} e\left(\frac{\gamma}{2\pi} \log \frac{\gamma}{2\pi\alpha e}\right) \\
& = 
\begin{cases}
~ \displaystyle{-\frac{e^{\pi i/4}}{2\pi} \frac{\mu(q)}{\varphi(q)}\frac{1}{\sqrt{\alpha}} }
& \text{if $\alpha=a/q$ with integers $a$ and $q \geq 1$, $(a,q)=1$}, \\
~0 & \text{otherwise}, 
\end{cases}
\endaligned 
\]
under RH (\cite[Corollary 5]{Fu82}). 
An analogue of this result for the sum in Theorem~\ref{thm_1} can be obtained as follows.

\begin{theorem} \label{thm_3} Assume RH. 
Then, for any $t \in \R$, 
\[
\aligned  
Z(x;t,\alpha)
& = 
\begin{cases}
~\displaystyle{
-\frac{e^{\pi i/4}\mu(q)}{\sqrt{2\pi}\varphi(q)}\cdot 
 \frac{1}{(1/2 - it)^2} \cdot \frac{x^{1/2 - it}}{\log x}\, (1+o(1))} & \\[10pt]
\hspace{88pt}  \text{if $\alpha=a/q$ with integers $a$ and $q \geq 1$, $(a,q)=1$}, & \\[10pt]
~\displaystyle{
o(x^{1/2}/\log x)
} \qquad \text{otherwise}, & 
\end{cases}
\endaligned 
\]
as $x \to \infty$, where the implied constant may depend on $t$ and $\alpha$. 
\end{theorem}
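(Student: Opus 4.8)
The plan is to run the argument of Theorem~\ref{thm_1} up to the point where it would invoke GRH, and then replace GRH by classical, unconditional estimates. First, under RH, the standard explicit-formula machinery (a truncated Perron step to smooth the sharp cutoff in \eqref{EQ_103}, the argument principle for $\zeta'/\zeta$, and stationary phase for the resulting prime-power integrals, the stationary point attached to $\Lambda(n)\,n^{-s}$ lying at $\gamma-t=2\pi\alpha n$ and carrying the phase $e(-\alpha n)$, while the archimedean and ``dual'' contributions are of lower order) yields
\[
Z(x;t,\alpha)=-\frac{e^{\pi i/4}}{\sqrt{2\pi}}\cdot\frac{1}{\log x}\sum_{n\le x}\frac{\Lambda(n)}{\sqrt n}\,n^{-it}\,e(-\alpha n)\,\log\frac{x}{n}\;+\;O(x^{\varepsilon})
\]
for every $\varepsilon>0$, with implied constant depending on $t$, $\alpha$, $\varepsilon$; this is precisely the identity underlying the proof of Theorem~\ref{thm_1}, before GRH is used to bound the prime sum, and I would simply quote it. The factor $\log(x/n)$ is exactly the mollifier $1-(\log x)^{-1}\log((\gamma-t)/(2\pi\alpha))$ of \eqref{EQ_103} evaluated at $\gamma-t=2\pi\alpha n$; combined with the overall $1/\log x$ it is what produces both the $1/\log x$ and the square of $(1/2-it)$ appearing in the statement.

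The key elementary observation is that, writing $\Theta(v)=\sum_{n\le v}\Lambda(n)\,n^{-1/2-it}\,e(-\alpha n)$ and using $\log(x/n)=\int_n^x dv/v$,
\[
\sum_{n\le x}\frac{\Lambda(n)}{\sqrt n}\,n^{-it}\,e(-\alpha n)\,\log\frac{x}{n}=\int_1^x\frac{\Theta(v)}{v}\,dv ,
\]
so that everything is controlled by partial sums of $\Lambda(n)\,e(-\alpha n)$. Indeed, if $\psi^{*}(v):=\sum_{n\le v}\Lambda(n)\,e(-\alpha n)$ satisfies $\psi^{*}(v)=o(v)$, then partial summation gives $\Theta(v)=o(v^{1/2})$ (the implied constant depending on $t$), hence $\int_1^x\Theta(v)v^{-1}\,dv=o(x^{1/2})$, and therefore $Z(x;t,\alpha)=o(x^{1/2}/\log x)$. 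For irrational $\alpha$ the hypothesis $\psi^{*}(v)=o(v)$ is Vinogradov's classical estimate for $\sum_{n\le v}\Lambda(n)e(\theta n)$ at a fixed irrational $\theta$; it uses neither RH nor GRH, and this settles the second case of the theorem.

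For $\alpha=a/q$ with $(a,q)=1$ I would decompose $e(-an/q)$, for $n$ coprime to $q$, into Dirichlet characters modulo $q$ (the prime powers dividing $q$ contribute only $O(\log q)$ to the whole prime sum). Each non-principal $\chi\bmod q$ contributes, via the same integral identity, a term governed by $\sum_{n\le v}\Lambda(n)\chi(n)\,n^{-1/2-it}$; by the prime number theorem in arithmetic progressions, $\sum_{n\le v}\Lambda(n)\chi(n)=O_{A}(v(\log v)^{-A})$ for fixed $q$ (Siegel--Walfisz, again with no use of GRH), and partial summation followed by the integral estimate bounds this contribution by $O_{q,t,A}\!\bigl(x^{1/2}(\log x)^{-A-1}\bigr)=o(x^{1/2}/\log x)$. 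The principal character $\chi_0$, whose coefficient in the decomposition is the Ramanujan sum $c_q(a)=c_q(1)=\mu(q)$ divided by $\varphi(q)$, contributes $\frac{\mu(q)}{\varphi(q)}\sum_{(n,q)=1,\ n\le v}\Lambda(n)\,n^{-1/2-it}$; here RH enters through $\psi(v)=v+O(\sqrt v\,(\log v)^{2})$, which by partial summation yields $\sum_{(n,q)=1,\ n\le v}\Lambda(n)\,n^{-1/2-it}=\frac{v^{1/2-it}}{1/2-it}+O((\log v)^{2})$, and then $\int_1^x(\cdot)v^{-1}\,dv=\frac{x^{1/2-it}}{(1/2-it)^{2}}+O((\log x)^{3})$. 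Multiplying by $-e^{\pi i/4}/(\sqrt{2\pi}\log x)$ reproduces the main term $-\frac{e^{\pi i/4}\mu(q)}{\sqrt{2\pi}\,\varphi(q)}\cdot\frac{1}{(1/2-it)^{2}}\cdot\frac{x^{1/2-it}}{\log x}$ of \eqref{EQ_104}, with overall error $o(x^{1/2}/\log x)$; the $(1+o(1))$ form is then immediate when $\mu(q)\ne 0$, while for $q$ not squarefree the main term vanishes and the bound $o(x^{1/2}/\log x)$ remains.

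The one point requiring care is that the full factor $1/\log x$ in the error must be recovered even for Liouville-type irrationals $\alpha$, for which no power-of-$\log$ saving is available in $\psi^{*}(v)$, and even for the non-principal characters; this is exactly what the mollifier buys, through the identity $\sum b_n\log(x/n)=\int_1^x(\sum_{n\le v}b_n)\,dv/v$, which upgrades the mere statement $\Theta(v)=o(v^{1/2})$ to $o(x^{1/2})$ for the weighted sum at no extra cost in the input. Everything else is a verbatim repetition of the estimates in the proof of Theorem~\ref{thm_1}, with the classical zero-free region in place of GRH; in particular, since the zero at $\gamma-t=0$ is excluded by the strict inequality in \eqref{EQ_103}, the conclusion is valid for every $t\in\R$, the only effect of $t$ being an ordinate of a nontrivial zero of $\zeta(s)$ being to introduce an additional term of size $O(\log x)$, which is harmless at the present precision.
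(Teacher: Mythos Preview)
Your proposal is correct and follows essentially the same route as the paper: invoke the explicit formula of Proposition~\ref{prop_3} (your quoted identity, whose error is in fact $O(1)$ rather than $O(x^\varepsilon)$) to reduce $Z(x;t,\alpha)$ to $\Psi(x;1/2+it,\alpha)$, then for irrational $\alpha$ apply Vinogradov's bound via partial summation, and for rational $\alpha=a/q$ decompose into Dirichlet characters and treat the non-principal pieces by Siegel--Walfisz. The only cosmetic difference is that for the rational case the paper passes through the sum-over-zeros explicit formulas \eqref{EQ_208}--\eqref{EQ_211} and bounds the zero sums by the classical zero-free region, whereas you apply the PNT/RH form $\psi(v)=v+O(\sqrt{v}(\log v)^2)$ directly and integrate by parts; both arguments are equivalent and yield the same $(1+o(1))$ conclusion.
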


The proofs of the main results, Theorems~\ref{thm_1}, \ref{thm_0724},  
and~\ref{thm_3}, are based on a method different from Fujii's~\cite{Fu92}.  
They rely on evaluating the sum
\begin{equation} \label{EQ_109}
\Psi(x; s, \alpha)
:=
\sum_{n \leq x} \frac{\Lambda(n)e(-\alpha n)}{n^s}
\left( 1 - \frac{\log n}{\log x} \right)
\end{equation}
in two different ways, where $x > 1$, $s \in \C$ with $\Re(s) > 0$, and $\alpha >0$.  
(For the exponential factor $e(-\alpha n)$ to be meaningful, we require $\alpha \neq 0$.  
In that case, we may assume without loss of generality that $\alpha > 0$ by considering complex conjugation.)
In the first approach, if $\alpha$ is rational, the orthogonality of Dirichlet characters  
is used to express $\Psi(x; s, \alpha)$ in terms of Dirichlet $L$-functions  
(Section~\ref{section_1st_method}).  
If $\alpha$ is irrational, the sum is estimated by reducing it to a result of Vinogradov  
on $\sum_{n \leq x} \Lambda(n) e(n\alpha)$ (Section~\ref{section_5}).
In the second approach, the explicit formula for $\zeta(s)$ is applied to express $\Psi(x; s, \alpha)$  
in terms of the nontrivial zeros of $\zeta(s)$ (Section~\ref{section_4}),  
with the help of auxiliary results on exponential integrals in Section~\ref{section_3}.  
By combining the results obtained from these two approaches,  
we derive the main results stated above.  
Furthermore, in Section~\ref{section_6}, we show that similar results hold  
even when the left-hand sides of \eqref{EQ_105} and \eqref{EQ_107}  
are replaced by sums over the nontrivial zeros of Dirichlet $L$-functions.

\section{Formulas for sums via Dirichlet $L$-functions} \label{section_1st_method}

We begin by precisely stating the definition of the nontrivial zeros of Dirichlet $L$-functions.  
For a primitive Dirichlet character $\chi \bmod q$ ($\geq 3$),  
we set $\kappa = \kappa(\chi) = 0$ if $\chi(-1) = 1$, 
and $\kappa = \kappa(\chi) = 1$ if $\chi(-1) = -1$, and define
\[
\xi(s,\chi) := \left(\frac{q}{\pi}\right)^{(s+\kappa)/2}
\Gamma\left(\frac{s+\kappa}{2}\right) L(s,\chi).
\]
Then $\xi(s,\chi)$ is an entire function satisfying %the functional equation 
$\xi(s,\chi) = \varepsilon(\chi)\,\xi(1-s,\overline{\chi})$ 
for some constant $\varepsilon(\chi)$ with $|\varepsilon(\chi)| = 1$  
(\cite[(10.17), Corollary 10.8]{MV07}).  
The zeros of $L(s,\chi)$ in the critical strip $0 \leq \Re(s) \leq 1$ with $s \neq 0$  
are called \emph{nontrivial zeros} and coincide with the zeros of $\xi(s,\chi)$.  
If $\chi \bmod q$ is imprimitive,  
let $\chi^\ast \bmod q^\ast$ be the primitive character that induces $\chi$. 
We regard the nontrivial zeros of $L(s,\chi)$ as those of $L(s,\chi^\ast)$.  
Hence $L(s,\chi)$ satisfies the GRH if $L(s,\chi^\ast)$ does.
\medskip

In this section, we first evaluate \eqref{EQ_109} for $s \in \C$ with $\Re(s) > 0$  
using the orthogonality of Dirichlet characters.  
By specializing this to $s = 1/2 + it$, we obtain %the result of 
the following proposition.

\begin{proposition} \label{prop_1}
Let $q \in \Z_{>0}$ and $t \in \R$.  
Then the GRH for all Dirichlet $L$-functions $L(s,\chi)$  
with $\chi \bmod q$, including the principal character,  
holds if and only if
\begin{equation} \label{EQ_201}
\Psi(x; 1/2 + it, a/q_1) 
- \frac{\mu(q_1)}{\varphi(q_1)} \cdot 
\frac{x^{1/2 - it}}{(1/2 - it)^2 \log x} 
= O(x^\varepsilon) 
\end{equation}
as $x \to \infty$, for any $\varepsilon>0$ and 
for any integer $a$ with $1 \leq a < q_1$, $(a,q_1) = 1$, and $q_1 \mid q$,  
where the implied constant may depend on $t$, $q$, and $\varepsilon > 0$. 
\end{proposition}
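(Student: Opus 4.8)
Set $s:=\tfrac12+it$. The plan is to evaluate $\Psi(x;s,a/q_1)$ by the orthogonality of Dirichlet characters and then to recognise the resulting sums as truncated logarithmic derivatives of the $L$-functions $L(\cdot,\chi)$, $\chi\bmod q_1$. Since $e(-an/q_1)$ depends only on $n\bmod q_1$, I would first remove the prime powers $n=p^k$ with $p\mid q_1$, whose total contribution to $\Psi$ is $\ll_{q_1}\sum_{p\mid q_1}\log p\sum_{k\ge1}p^{-k/2}\ll1$, and on the coprime part substitute the identity
\[
e(-an/q_1)\,\mathbf 1_{(n,q_1)=1}=\frac1{\varphi(q_1)}\sum_{\chi\bmod q_1}\chi(a)\,\overline{\tau(\chi)}\,\chi(n)\qquad((a,q_1)=1),
\]
obtained by expanding $u\mapsto e(-au/q_1)$ in the characters of $(\Z/q_1\Z)^\times$ and computing the twisted Gauss sums. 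This yields
\[
\Psi(x;s,a/q_1)=\frac1{\varphi(q_1)}\sum_{\chi\bmod q_1}\chi(a)\,\overline{\tau(\chi)}\,S(x,s,\chi)+O(1),\qquad
S(x,s,\chi):=\sum_{n\le x}\frac{\Lambda(n)\chi(n)}{n^{s}}\Bigl(1-\frac{\log n}{\log x}\Bigr),
\]
and $\overline{\tau(\chi_0)}=\sum_{(u,q_1)=1}e(u/q_1)=\mu(q_1)$, $\chi_0(a)=1$, so the principal character will supply the main term of \eqref{EQ_201}.

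For the implication ``GRH $\Rightarrow$ \eqref{EQ_201}'', assume the GRH for all $\chi\bmod q$, hence for all $\chi\bmod q_1$ with $q_1\mid q$. Using $(1-\tfrac{\log n}{\log x})\mathbf 1_{n\le x}=\tfrac1{\log x}\cdot\tfrac1{2\pi i}\int_{(c)}(x/n)^{w}w^{-2}\,dw$ with $c>\tfrac12$ gives $S(x,s,\chi)=\tfrac1{\log x}\cdot\tfrac1{2\pi i}\int_{(c)}\bigl(-\tfrac{L'}{L}(s+w,\chi)\bigr)\tfrac{x^{w}}{w^{2}}\,dw$, and I would shift the contour to $\Re w=-\delta$. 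The residue at $w=0$ is a polynomial in $\log x$ (of size $\asymp\log x$ precisely when $L(s,\chi)$ vanishes at $s$), hence $O(x^{\varepsilon})$ after division by $\log x$; the residue at $w=1-s$ arises only for $\chi=\chi_0$ from the pole of $\zeta$ at $1$, equals $x^{1-s}/(1-s)^2$, and after the normalisations above contributes precisely $\tfrac{\mu(q_1)}{\varphi(q_1)}\cdot\tfrac{x^{1/2-it}}{(1/2-it)^2\log x}$; the residues over the nontrivial zeros $\rho_\chi\ne s$ sum to $\sum_{\rho_\chi}(-m_\chi(\rho_\chi))\tfrac{x^{\rho_\chi-s}}{(\rho_\chi-s)^{2}}=O_{t,q}(1)$, since GRH forces $|x^{\rho_\chi-s}|=1$ and $\sum_{\rho_\chi}|\rho_\chi-s|^{-2}<\infty$; and the remaining integral on $\Re w=-\delta$ is $\ll_{t,q,\delta}x^{-\delta}$ by the standard GRH bound $\tfrac{L'}{L}(\tfrac12-\delta+iv,\chi)\ll_{\delta}\log(q(|v|+2))$ from the partial-fraction expansion of $L'/L$ and the functional equation. (Equivalently one may argue by partial summation against $\psi(x,\chi):=\sum_{n\le x}\Lambda(n)\chi(n)=\mathbf 1_{\chi=\chi_0}\,x+O(x^{1/2}\log^2(qx))$, with no boundary terms because the mollifier vanishes at $n=x$.) Summing over $\chi\bmod q_1$ gives \eqref{EQ_201}.

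For the converse ``\eqref{EQ_201} $\Rightarrow$ GRH'', assume \eqref{EQ_201} for all admissible $a$ and $q_1\mid q$. For a \emph{primitive} non-principal $\chi\bmod q_1$, separability of the Gauss sum, $\tau(\overline\chi)\chi(n)=\sum_{a\bmod q_1}\overline\chi(a)e(an/q_1)$ (valid for all $n$, with $\tau(\overline\chi)\ne0$ since $|\tau(\overline\chi)|=\sqrt{q_1}$), expresses $S(x,s,\chi)$ as a linear combination, with coefficients of modulus $|\tau(\overline\chi)|^{-1}$, of the $\Psi(x;s,a/q_1)$ over $1\le a<q_1$, $(a,q_1)=1$; substituting \eqref{EQ_201} and using $\sum_{(a,q_1)=1}\overline\chi(a)=0$ cancels the main term and leaves $S(x,s,\chi)=O(x^{\varepsilon})$. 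Taking $q_1=p$ a prime factor of $q$, the only character that could obstruct inverting the system is $\chi_0\bmod p$, but $\tau(\chi_0)=\mu(p)=-1\ne0$, so combining the vanishing just obtained for the non-principal $\chi\bmod p$ with \eqref{EQ_201} for modulus $p$ isolates $S(x,s,\chi_0\bmod p)$, and hence (discarding the $O(1)$ powers of $p$) $\sum_{n\le x}\tfrac{\Lambda(n)}{n^{s}}(1-\tfrac{\log n}{\log x})=\tfrac{x^{1/2-it}}{(1/2-it)^2\log x}+O(x^{\varepsilon})$. A Landau-type no-cancellation argument then finishes: for $\Re z$ large, $\int_1^\infty\bigl(S(x,s,\chi)\log x\bigr)x^{-z-1}\,dx=-z^{-2}\tfrac{L'}{L}(s+z,\chi)$, and since the integrand is $O(x^{\varepsilon}\log x\cdot x^{-\Re z-1})$ the left-hand side is holomorphic for $\Re z>0$; therefore $-\tfrac{L'}{L}(w,\chi)$ is holomorphic for $\Re w>\tfrac12$, i.e. $L(w,\chi)\ne0$ there, and applying this to $\overline\chi$ as well and using $\xi(s,\chi)=\varepsilon(\chi)\xi(1-s,\overline\chi)$ places all nontrivial zeros of $L(\cdot,\chi)$ on $\Re w=\tfrac12$, while the analogous argument for $\zeta$ (after subtracting the explicit term coming from its pole at $1$) gives RH. Since the GRH for an imprimitive character means, by the convention fixed above, the GRH for its inducing primitive character, this establishes the GRH for all $\chi\bmod q$.

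The main obstacle is the converse direction. The transform from $\{\Psi(x;s,a/q_1)\}_a$ to $\{S(x,s,\chi)\}_{\chi\bmod q_1}$ fails to be invertible exactly when some Gauss sum $\tau(\chi)$ vanishes (imprimitive $\chi$), so the argument must be organised to invert only over primitive characters---legitimate because the GRH is by definition a statement about the inducing primitive character---while the principal part, i.e. RH, is extracted separately through a prime modulus. Coupled with this, the one genuinely non-formal input is the implication ``$O(x^{\varepsilon})$ bound $\Rightarrow$ holomorphy of $L'/L$''; making it rigorous requires the weight in \eqref{EQ_109} to be chosen precisely so that its Mellin transform equals $-z^{-2}\tfrac{L'}{L}(s+z,\chi)$, together with careful bookkeeping of the pole of $\zeta$ in the principal case.
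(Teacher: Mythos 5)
Your proposal is correct and follows essentially the same route as the paper: the Gauss-sum/orthogonality decomposition of $\Psi(x;s,a/q_1)$ into the character-twisted sums $\Psi(x;s,\chi)$, explicit-formula-type asymptotics under GRH for the forward direction (the paper quotes these from \cite{Su24} and \cite{So09} rather than re-deriving them by contour shift or partial summation, but the content is the same), and the Mellin-transform holomorphy (Landau-type) argument of \eqref{EQ_207} for the converse, inverted only over primitive characters. Your explicit isolation of the principal character via a prime divisor $q_1=p$ to recover RH is actually spelled out more carefully than in the paper's written proof, which leaves that step implicit.
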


\begin{proof} 
Let $s \in \C$ with $\Re(s)>0$. 
By splitting the sum in definition~\eqref{EQ_109} into the terms 
with $n$ coprime to $q_1$ and those that are not, we obtain
\begin{equation} \label{EQ_202}
\aligned 
\Psi(x; s, a/q_1) 
& = 
\sum_{\substack{n \leq x \\ (n,q_1)=1}}
 \frac{\Lambda(n)e(-an/q_1)}{n^s}\left( 1 - \frac{\log n}{\log x} \right) \\
& \quad 
+ \sum_{p \mid q_1} \log p
\sum_{m=1}^{\lfloor \log x/\log p \rfloor} \frac{e(-ap^m/q_1)}{p^{ms}}\left( 1 - \frac{\log p^m}{\log x} \right).
\endaligned 
\end{equation}
If $(an,q_1)=1$, then we have
\begin{equation} \label{EQ_203}
\aligned 
 e(-an/q_1)
& =\frac{1}{\varphi(q_1)} \sum_{\chi \bmod q_1} \overline{\tau(\chi)}\chi(a)\chi(n),
\endaligned 
\end{equation}
where the sum is taken over all Dirichlet characters modulo $q_1$ 
(\cite[Exercises 9.2.1.1.(a)]{MV07}). 

Using~\eqref{EQ_203}, the first sum on the right-hand side of~\eqref{EQ_202} can be evaluated as
\[
\aligned 
\sum_{\substack{n \leq x \\ (n,q_1)=1}} 
 \frac{\Lambda(n)e(-an/q_1)}{n^s}\left( 1 - \frac{\log n}{\log x} \right)  
& = 
\frac{1}{\varphi(q_1)}\sum_\chi \overline{\tau(\chi)}\,\chi(a)
\Psi(x; s, \chi), 
\endaligned 
\]
where we define
\begin{equation} \label{EQ_204}
\Psi(x; s, \chi):=
\sum_{n \leq x} 
 \frac{\Lambda(n)\chi(n)}{n^s}\left( 1 - \frac{\log n}{\log x} \right). 
\end{equation}
Here we used the fact that $\chi(n) = 0$ whenever $(n, q_1) > 1$, 
so the condition $(n,q_1)=1$ may be dropped from the inner sum \eqref{EQ_204}.

As for the second sum on the right-hand side of~\eqref{EQ_202}, 
the inner sum over $m$ can be calculated as follows
\[
\aligned 
\,&\sum_{m=1}^{\lfloor \log x/\log p \rfloor} 
\frac{e(-ap^m/q_1)}{p^{sm}}\left( 1 - \frac{\log p^m}{\log x} \right) \\
& = \frac{1}{\log x} 
\sum_{m=1}^{\lfloor \log x/\log p \rfloor} 
\frac{e(-ap^m/q_1)}{p^{sm}} \int_{p^m}^{x} \frac{1}{t} \, dt 
= \frac{1}{\log x} \int_{1}^{x}\frac{1}{t}
\sum_{m=1}^{\lfloor \log t/\log p \rfloor} 
\frac{e(-ap^m/q_1)}{p^{sm}}   \, dt \\
& = \frac{1}{\log x} \int_{1}^{x}\frac{1}{t}
\left(\sum_{m=1}^{\infty} 
\frac{e(-ap^m/q_1)}{p^{sm}}  + O(t^{-\Re(s)}) \right) \, dt 
 = \sum_{m=1}^{\infty} 
\frac{e(-ap^m/q_1)}{p^{sm}} + O\left(\frac{1}{\log x}\right), 
\endaligned
\]
where the sum on the right-hand side is absolutely convergent, 
and the implied constant depends only on $\Re(s)>0$.  

Therefore, we obtain
\begin{equation} \label{EQ_205}
\aligned 
\Psi(x; s, a/q_1) 
& = 
\frac{1}{\varphi(q_1)}\sum_{\chi \bmod q_1} \overline{\tau(\chi)}\,\chi(a)
\Psi(x; s, \chi)
+ A_1(s,a/q_1)+ O\left(\frac{1}{\log x}\right)
\endaligned 
\end{equation}
as $x \to \infty$ for some constant $A_1(s,a/q_1)$. 
Multiplying both sides of \eqref{EQ_205} 
by $\bar{\chi}(a)$ and summing over $a$ with $1 \leq a < q_1$,  
we obtain 
\begin{equation} \label{EQ_206}
\sum_{a=1}^{q_1} \bar{\chi}(a)\Psi(x; s, a/q_1) 
= \overline{\tau(\chi)}\Psi(x; s, \chi)+ A_2(s,a/q_1)+ O\left(\frac{1}{\log x}\right)
\end{equation}
by the orthogonality of Dirichlet characters: 
\[
\frac{1}{\varphi(q)}\sum_{a=1}^q \chi(a) 
= 
\begin{cases}
1 & \text{if $\chi=\chi_0$}, \\[6pt]
0 & \text{if $\chi\ne\chi_0$}. 
\end{cases}
\]
Therefore, if $\chi$ is a primitive Dirichlet character modulo $q$ 
and \eqref{EQ_201} holds for all $1 \leq a < q_1$ with $s = 1/2 + it$ and $q_1 = q$, 
then $\Psi(x; 1/2 + it, \chi) \ll_\varepsilon x^\varepsilon$ by \eqref{EQ_206}. 
The estimate implies the GRH for $L(s,\chi)$ via the integral formula
\begin{equation} \label{EQ_207}
\int_{1}^{\infty}  
\Bigl(-\Psi(x;s,\chi)\log x\Bigr) \, x^{-z+1/2} \, \frac{dx}{x} 
= \frac{1}{(z-1/2)^2} \frac{L^\prime}{L}\left(s+z-\frac{1}{2},\chi\right) 
\end{equation}
for $s=1/2+it$, 
which can be proved by an argument similar to that in \cite[(4.5)]{Su24}. 
If $\chi$ is imprimitive, let $\chi^\ast$ be the primitive character modulo $q^\ast$ that induces $\chi$. 
Suppose that \eqref{EQ_201} holds for all $1 \leq a < q^\ast$ with $s = 1/2 + it$ and $q_1 = q^\ast$. 
Then, by applying \eqref{EQ_206} with $q_1 = q^\ast$, the GRH for $L(s, \chi^\ast)$ follows from the above argument, and hence the GRH also holds for $L(s, \chi)$. 

Conversely, by inverting the integral formula \eqref{EQ_207}, 
the GRH for $L(s,\chi)$ yields the estimate 
$\Psi(x; 1/2 + it, \chi) \ll_\varepsilon x^\varepsilon.$ 
Therefore, assuming the GRH for $L(s,\chi)$ for all $\chi\bmod q_1$, 
we obtain \eqref{EQ_201} from \eqref{EQ_205}.
\end{proof} 

An explicit formula for $\Psi(x; s, \chi)$ 
follows from the inversion of \eqref{EQ_207}. 
For a non-principal character $\chi$, let $\chi^\ast$ be the primitive character that induces $\chi$. Dividing both sides of \cite[(4.7)]{Su24} by $\log x$ gives
\begin{equation} \label{EQ_208} 
\aligned 
\Psi(x;s,\chi)
& = -\frac{L'}{L}(s,\chi) 
- \frac{1}{\log x}\left(\frac{L'}{L}\right)^\prime(s,\chi) \\
& \quad 
- \frac{1}{\log x}\sum_{\rho_\chi} \frac{x^{\rho_\chi-s}}{(\rho_\chi-s)^2} 
- \frac{1}{\log x}\sum_{k=0}^{\infty} \frac{x^{-2k-\kappa(\chi^\ast)-s}}{(2k+\kappa(\chi^\ast)+s)^2}
\endaligned 
\end{equation}
for $x>1$ and $s \ne \rho_\chi, -2k - \kappa(\chi^\ast)$ ($k \in \Z_{\geq 0}$),  
where the sum $\sum_{\rho_\chi}$ is taken over all nontrivial zeros of $L(s,\chi)$, 
counted with multiplicity (cf.~the first paragraph of this section). 
In \eqref{EQ_208}, the sum over $\rho_\chi$ converges absolutely, 
since $\sum_{\rho_\chi} |\rho_\chi|^{-1-\delta} <\infty$ 
for any $\delta>0$. 

If $s$ is a zero of $L(s,\chi)$ of order $m_s$, then we have
\begin{equation} \label{EQ_209}
\aligned 
\Psi(x;s,\chi)
& = -\frac{m_s}{2}\log x 
-\lim_{z \to s} \left[\frac{L^\prime}{L}(z,\chi) - \frac{m_s}{z-s}\right] \\
& \quad 
- \frac{1}{\log x} 
\lim_{z \to s} \left[\left(\frac{L^\prime}{L}\right)^\prime(z,\chi) + \frac{m_s}{(z-s)^2}\right] \\
& \quad
- \frac{1}{\log x} \sum_{\rho_\chi\ne s} \frac{x^{\rho_\chi-s}}{(\rho_\chi-s)^2} 
- \frac{1}{\log x} \sum_{k=0}^{\infty} \frac{x^{-2k-\kappa(\chi^\ast)-s}}{(2k+\kappa(\chi^\ast)+s)^2}
\endaligned 
\end{equation}
by \cite[(4.9)]{Su24}. 
On the other hand, if $\chi$ is the principal character $\chi_0$, then  
\begin{equation} \label{EQ_210} 
\aligned 
\Psi(x;s,\chi_0)
= 
\Psi(x;s,1)
- \sum_{p \mid q} \frac{\log p}{p^s - 1}
+ O_q\!\left(\frac{1}{\log x}\right).
\endaligned 
\end{equation}
Moreover, from \cite[Lemma~1]{So09} we have
\begin{equation} \label{EQ_211} 
\aligned 
\Psi(x;s,1)
& = \frac{x^{1-s}}{(1-s)^2\log x} - \frac{\zeta'}{\zeta}(s)
- \frac{1}{\log x}\left(\frac{\zeta'}{\zeta}\right)^\prime(s) \\
& \quad 
- \frac{1}{\log x}\sum_{\rho} \frac{x^{\rho-s}}{(\rho-s)^2} 
- \frac{1}{\log x}\sum_{k=1}^{\infty} \frac{x^{-2k-s}}{(2k+s)^2}
\endaligned 
\end{equation}
for $x > 1$ and $s \ne \rho, -2k$ ($k \in \Z_{> 0}$),  
where the sum $\sum_\rho$ is taken over all nontrivial zeros of $\zeta(s)$, counted with multiplicity,  
and converges absolutely since  
\begin{equation} \label{EQ_212}
\sum_\rho |\rho|^{-1-\delta} < \infty
\end{equation}
for any $\delta > 0$. 

Recalling that $\tau(\chi_0) = \mu(q)$ (\cite[(4.7), (9.5)]{MV07}),  
we substitute \eqref{EQ_208}, \eqref{EQ_209}, \eqref{EQ_210}, and \eqref{EQ_211} into \eqref{EQ_205},  
from which the following proposition follows.

\begin{proposition} \label{prop_2}
Let $a$ and $q$ be integers with $q \geq 1$ and $(a, q) = 1$. 
Assume the GRH for all $L(s,\chi)$ with Dirichlet characters $\chi \bmod q$. 
\begin{enumerate}
\item 
If either $\Re(s) > 1/2$, or $\Re(s) = 1/2$ and $L(s,\chi) \ne 0$ 
for all Dirichlet characters $\chi \bmod q$, 
including the principal character $\chi_0$, 
then
\begin{equation} \label{EQ_213}
\aligned 
\Psi(x;s,a/q) 
& = \frac{\mu(q)}{\varphi(q)} \cdot 
 \frac{x^{1-s}}{(1-s)^2\log x} \\
& \quad - \frac{1}{\varphi(q)} \left[ \mu(q)
\left( \frac{\zeta'}{\zeta}(s) 
+ \sum_{p \mid q} \frac{\log p}{p^s-1} \right) \right. \\
& \qquad \qquad \qquad \quad \left. +\sum_{\chi\not=\chi_0} \overline{\tau(\chi)}\,\chi(a)
\frac{L'}{L}(\chi, s) \right] \\ 
& \quad + \sum_{p \mid q} \log p\sum_{m=1}^{\infty} 
\frac{e(-ap^m/q)}{p^{sm}} 
 + O\left(\frac{1}{\log x}\right)
\endaligned 
\end{equation}
as $x \to \infty$, 
where the implied constant may depend on $s$ and $q$.
This can also be written as
\begin{equation} \label{EQ_214} 
\aligned 
\Psi(x;s,a/q) 
- \frac{\mu(q)}{\varphi(q)} \cdot 
 \frac{x^{1-s}}{(1-s)^2\log x} 
 =  A_3(s,a/q)+ O\left(\frac{1}{\log x}\right)
\endaligned 
\end{equation}
for some constant $A_3(s,a/q)$. 

\item If $s = 1/2 + it$ for a real number $t$ that is not the ordinate of any nontrivial zero of $\zeta(s)$, 
and $m(t, \chi)$ denotes the order of $L(s, \chi)$ at $s = 1/2 + it$, then 
\begin{equation} \label{EQ_0215} 
\aligned 
\Psi(x;s,a/q) 
& - \frac{\mu(q)}{\varphi(q)} \cdot 
 \frac{x^{1-s}}{(1-s)^2\log x} \\
& =  - \frac{1}{2\varphi(q)}\sum_\chi \overline{\tau(\chi)}\,\chi(a) \, m(t,\chi) \log x \\
& \quad 
 + A_4(s,a/q)
 + O\left(\frac{1}{\log x}\right)
\endaligned 
\end{equation}
as $x \to \infty$, for some constant $A_4(s,q/q)$, 
where the implied constant may depend on $s$ and $q$.
\end{enumerate}
\end{proposition}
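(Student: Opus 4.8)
The plan is to carry out the substitution announced just before the statement: insert the explicit formulas \eqref{EQ_208}, \eqref{EQ_209}, \eqref{EQ_210}, \eqref{EQ_211} into the character decomposition \eqref{EQ_205} with $q_1=q$, and then separate the resulting terms by their order of growth in $x$. Thus one starts from
\[
\Psi(x;s,a/q) = \frac{1}{\varphi(q)}\sum_{\chi \bmod q}\overline{\tau(\chi)}\,\chi(a)\,\Psi(x;s,\chi) + A_1(s,a/q) + O\!\left(\frac{1}{\log x}\right),
\]
with $A_1(s,a/q)=\sum_{p\mid q}\log p\sum_{m\ge1}e(-ap^m/q)\,p^{-sm}$ the absolutely convergent constant isolated in the proof of Proposition~\ref{prop_1}. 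First I would single out the principal character, using $\tau(\chi_0)=\mu(q)\in\R$ and $\chi_0(a)=1$ (as $(a,q)=1$): that term equals $\frac{\mu(q)}{\varphi(q)}\Psi(x;s,\chi_0)$, and by \eqref{EQ_210} combined with \eqref{EQ_211} its leading part is $\frac{\mu(q)}{\varphi(q)}\cdot\frac{x^{1-s}}{(1-s)^2\log x}$, which is precisely the main term of \eqref{EQ_213} and \eqref{EQ_0215}; the other pieces coming from \eqref{EQ_210}--\eqref{EQ_211}, namely $-\frac{\zeta'}{\zeta}(s)-\sum_{p\mid q}\frac{\log p}{p^{s}-1}$, are $x$-independent.

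The central step is to control, uniformly in $x$, the sums over zeros appearing in \eqref{EQ_208}, \eqref{EQ_209}, \eqref{EQ_211}, each of which carries a factor $(\log x)^{-1}$. Under RH --- which, by the convention fixed at the start of this section, is exactly the GRH for $\chi_0 \bmod q$, the principal character being induced by the trivial character modulo $1$ whose $L$-function is $\zeta$ --- and under the GRH for the non-principal $\chi \bmod q$, every nontrivial zero $\rho$ of $\zeta$ and $\rho_\chi$ of $L(\cdot,\chi)$ has real part $1/2$. Since $\Re(s)\ge 1/2$ in both parts of the proposition, $|x^{\rho-s}|=x^{1/2-\Re(s)}\le 1$, $|x^{\rho_\chi-s}|\le 1$, and $|x^{-2k-s}|,\,|x^{-2k-\kappa-s}|\le x^{-\Re(s)}\le 1$; hence $\bigl|\sum_\rho x^{\rho-s}(\rho-s)^{-2}\bigr|\le\sum_\rho|\rho-s|^{-2}$, and the latter series converges because \eqref{EQ_212} gives $\sum_\rho|\rho|^{-1-\delta}<\infty$ (take $\delta=1$) while only finitely many zeros lie within distance $1$ of $s$ --- still finitely many, in the setting of \eqref{EQ_209}, after the single zero $\rho_\chi=s$ is deleted from the sum. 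The same applies to the $L(\cdot,\chi)$ zero-sums (by the property quoted after \eqref{EQ_208}) and to the trivial-zero sums. Therefore each such sum is bounded by a constant depending only on $s$ and $q$, so dividing by $\log x$ makes it $O\!\left((\log x)^{-1}\right)$, as are the fixed quantities $(\zeta'/\zeta)'(s)$, $(L'/L)'(s,\chi)$ and their regularised counterparts in \eqref{EQ_209}.

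Granting this, both parts are finished by bookkeeping. In part~(1) the hypothesis gives $L(s,\chi)\ne 0$ for every $\chi\bmod q$, so \eqref{EQ_208} applies to each non-principal $\chi$ and \eqref{EQ_211} to $\chi_0$, with no term proportional to $\log x$; substituting into \eqref{EQ_205} and keeping the $x$-independent contributions --- $-\frac{\mu(q)}{\varphi(q)}\bigl(\frac{\zeta'}{\zeta}(s)+\sum_{p\mid q}\frac{\log p}{p^{s}-1}\bigr)$, $-\frac{1}{\varphi(q)}\sum_{\chi\ne\chi_0}\overline{\tau(\chi)}\chi(a)\frac{L'}{L}(s,\chi)$, and $A_1(s,a/q)$ --- yields \eqref{EQ_213}, and lumping these constants into a single $A_3(s,a/q)$ gives \eqref{EQ_214}. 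In part~(2), with $s=1/2+it$ and $t$ not an ordinate of a zero of $\zeta$, I would use \eqref{EQ_209} (with $m_s=m(t,\chi)$) for the non-principal $\chi$ with $L(1/2+it,\chi)=0$ and \eqref{EQ_208} for the rest; since \eqref{EQ_209} reduces to \eqref{EQ_208} when $m_s=0$, this uniformly contributes $-\frac{m(t,\chi)}{2}\log x$ for every non-principal $\chi$. As $L(s,\chi_0)=\zeta(s)\prod_{p\mid q}(1-p^{-s})$ and neither factor vanishes at $s=1/2+it$ (the hypothesis on $t$ excludes $\zeta$, and $1-p^{-s}\ne 0$ since $\Re(s)=1/2\ne 0$), we have $m(t,\chi_0)=0$, so these $\log x$-terms assemble into $-\frac{1}{2\varphi(q)}\sum_\chi\overline{\tau(\chi)}\chi(a)m(t,\chi)\log x$; absorbing the remaining constants into $A_4(s,a/q)$ produces \eqref{EQ_0215}.

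The only genuine obstacle is the uniform boundedness of the zero-sums, which is where RH and the GRH are used: they force $\Re(\rho)=\Re(\rho_\chi)=1/2\le\Re(s)$ so that $|x^{\rho-s}|$ cannot grow, after which the convergence of $\sum|\rho-s|^{-2}$ is just the standard zero-counting bound already quoted. Everything else is routine: the decomposition is \eqref{EQ_205}, the main term rides entirely on the factor $\mu(q)/\varphi(q)$, and the surviving analytic quantities are simply named $A_1,A_3,A_4$. One point to record is the borderline value $s=1$ in part~(1), where $\zeta'/\zeta$ and the main term $x^{1-s}/((1-s)^2\log x)$ are each singular; there one either excludes $s=1$ or interprets the two singularities as cancelling (consistent with $\Psi(x;s,1)$ being entire in $s$), and in part~(2) one must delete from the $\rho_\chi$-sum of \eqref{EQ_209} the zero at $s$ and no others.
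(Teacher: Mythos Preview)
Your proposal is correct and follows exactly the route the paper takes: the paper's entire proof is the sentence ``Recalling that $\tau(\chi_0)=\mu(q)$, we substitute \eqref{EQ_208}, \eqref{EQ_209}, \eqref{EQ_210}, and \eqref{EQ_211} into \eqref{EQ_205}, from which the following proposition follows,'' and you have spelled out precisely that substitution, including the GRH-based control of the zero-sums that the paper leaves implicit.
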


The infinite sum appearing on the right-hand side of \eqref{EQ_213}, 
$\sum_{m=1}^{\infty} e(-ap^m/q)p^{-sm}$, 
can be rewritten as a finite sum. 
However, we omit the details, since it will not be needed later.

\section{Auxiliary results on certain exponential integrals} \label{section_3} 

In this section, 
we consider the exponential integrals
\[
\aligned
G(x;\alpha,z)
: &= \int_{1}^{x} e(-\alpha u) \left(1-\frac{\log u}{\log x}\right) u^z \, du 
\endaligned
\]
for $x > 1$, $\alpha > 0$, and $z \in \mathbb{C}\) with \(\Re(z) > -1$, 
and state formulas and estimates that will be needed later 
for calculating the sum $\Psi(x;s,\alpha)$ in \eqref{EQ_109} 
by a method different from that in Section \ref{section_1st_method}. 
First, we recall the following well-known lemma.

\begin{lemma} \label{lem_1}
Let $f(x)$ and $g(x)$ be real functions, 
$f(x)$ is differentiable, 
$g(x)/f'(x)$ monotonic, 
and $f'(x)/g(x) \geq m >0$, 
or $f'(x)/g(x) \leq -m<0$,  
throughout the interval $[a,b]$. 
Then 
\[
\left|
\int_{a}^{b} g(x)e^{if(x)} \, dx
\right| \leq \frac{4}{m}.
\]
\end{lemma}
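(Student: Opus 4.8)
The plan is to prove this by the standard first-derivative test via integration by parts; it is a classical estimate (essentially Titchmarsh's first-derivative test lemma). First I would set $\phi(x) := g(x)/f'(x)$ on $[a,b]$. The hypothesis $f'(x)/g(x)\geq m>0$ (or $\leq -m<0$) forces $g(x)\neq 0$ and $f'(x)\neq 0$ throughout $[a,b]$, so $\phi$ is a well-defined real-valued function; moreover $|\phi(x)|\leq 1/m$ for all $x\in[a,b]$, and $\phi$ does not change sign. By hypothesis $\phi$ is monotonic on $[a,b]$, hence of bounded variation with total variation $V_a^b(\phi)=|\phi(b)-\phi(a)|$.

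Next I would rewrite the integrand so that $e^{if}$ appears as an exact derivative: since $\tfrac{d}{dx}e^{if(x)} = if'(x)e^{if(x)}$, we have
\[
g(x)e^{if(x)} = \frac{g(x)}{if'(x)}\cdot if'(x)e^{if(x)} = \frac{\phi(x)}{i}\,\frac{d}{dx}e^{if(x)}.
\]
Integrating by parts, and interpreting the resulting integral against $d\phi$ in the Riemann--Stieltjes sense so as not to impose a differentiability hypothesis on $\phi$ (legitimate since $\phi$ is monotonic), gives
\[
\int_a^b g(x)e^{if(x)}\,dx
= \frac{1}{i}\Bigl[\phi(x)e^{if(x)}\Bigr]_a^b
- \frac{1}{i}\int_a^b e^{if(x)}\,d\phi(x).
\]

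Finally I would estimate the two terms. Since $f$ is real, $|e^{if(x)}|=1$, so the boundary term has modulus at most $|\phi(b)|+|\phi(a)|\leq 2/m$. For the Stieltjes integral, again using $|e^{if}|=1$, the modulus is at most $\int_a^b|d\phi|=V_a^b(\phi)=|\phi(b)-\phi(a)|\leq 2/m$. Adding the two bounds yields $\bigl|\int_a^b g e^{if}\,dx\bigr|\leq 4/m$, as claimed.

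There is no substantial obstacle here; the work is in the bookkeeping rather than any genuine difficulty. The only points that need a moment's care are (i) checking that $f'$ and $g$ do not vanish so that $\phi$ is well defined, which is immediate from the sign condition on $f'/g$, and (ii) justifying the integration by parts without assuming $\phi$ differentiable, which the bounded-variation framework handles. An equally valid alternative would be to split into real and imaginary parts and apply Bonnet's form of the second mean value theorem to $\int_a^b \phi\,(\sin f)'\,dx$ and $\int_a^b \phi\,(-\cos f)'\,dx$ separately, but the complex integration-by-parts route above is cleaner and produces the constant $4$ without loss.
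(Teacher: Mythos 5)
Your proof is correct. Note, however, that the paper does not prove this lemma at all: it simply cites Titchmarsh \cite[Lemma 4.3]{Tit86}, whose textbook proof runs slightly differently from yours — one writes $\left|\int_a^b g\,e^{if}\,dx\right| = \int_a^b g\cos(f-\theta)\,dx$ for a suitable constant phase $\theta$, and then applies the second mean value theorem to the monotonic factor $g/f'$ against $f'\cos(f-\theta)$, the two resulting sine differences each being at most $2$ in modulus, giving $4/m$; this is essentially the ``alternative'' you sketch at the end, except that the rotation by $e^{-i\theta}$ is what lets one treat real and imaginary parts in a single stroke rather than separately (treating them separately would only give $4\sqrt{2}/m$). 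Your route via complex integration by parts against the monotonic (hence bounded-variation) function $\phi = g/f'$ is an equally standard and clean way to reach the same constant: boundary terms contribute at most $2/m$ and the total variation $|\phi(b)-\phi(a)|$ at most $2/m$ (indeed, since $\phi$ keeps a fixed sign you could even get $3/m$). The only point to flag, and it is shared by the classical proof and glossed over in the lemma's informal hypotheses, is the identification $\int_a^b \phi\,(e^{if})'\,dx = \int_a^b \phi\,d\bigl(e^{if}\bigr)$, which tacitly requires enough regularity of $f'$ for the fundamental theorem of calculus to apply to $e^{if}$; in all applications in the paper $f$ is smooth, so this is harmless.
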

\begin{proof}
See~\cite[Lemma 4.3]{Tit86}. 
\end{proof}

Let $E_s(z)$ denote the generalized exponential integral, defined by  
\[
E_s(z) := \frac{e^{-z}}{\Gamma(s)}
\int_{0}^{\infty} \frac{e^{-zt} t^{s-1}}{1 + t} \, dt
\]
for $\Re(s) > 0$ and $|\arg z| < \pi/2$,  
and extended to other values of $s$ and $z$ by analytic continuation 
(\cite[(2.11.5)]{NIST}).

\begin{lemma} \label{lem_2}
Let $\alpha>0$. For $-1 < z \leq 1/2$, we have 
\begin{equation} \label{EQ_301}
G(x;\alpha,z)
=
E_{-z}(2\pi i \alpha) + O\left( \frac{1}{\log x} \right)
\end{equation}
as $x \to \infty$, 
where the implied constant may depend on $\alpha$ and $z$. 
\end{lemma}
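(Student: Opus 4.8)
The plan is to split $G(x;\alpha,z)$ into its two natural pieces. Write
\[
G(x;\alpha,z) = \int_{1}^{x} e(-\alpha u)\, u^z \, du
- \frac{1}{\log x}\int_{1}^{x} e(-\alpha u)\, u^z \log u \, du,
\]
so it suffices to show that the first integral equals $E_{-z}(2\pi i\alpha) + O(1/\log x)$ and that the second integral is $O(1)$ uniformly in $x$. For the second, integration by parts (differentiating $u^z\log u$, integrating $e(-\alpha u)$) reduces the bound to controlling $\int_1^x e(-\alpha u) u^{z-1}(z\log u + 1)\,du$ and the boundary term $x^z\log x / \alpha$, which is $O(\log x)$ for $z \le 1/2 < 1$; after dividing by $\log x$ this contributes $O(1)$, and a further application of Lemma~\ref{lem_1} (or a second integration by parts) handles the remaining integral since $e(-\alpha u) = e^{if(u)}$ with $f(u) = -2\pi\alpha u$, $f'(u) = -2\pi\alpha$ bounded away from $0$, and the amplitude $u^{z-1}(z\log u+1)$ is eventually monotonic. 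The point is that the $1/\log x$ prefactor kills everything that might grow, leaving $O(1)$.

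The first integral is the main term. First I would extend the range to infinity: $\int_1^x = \int_1^\infty - \int_x^\infty$, where $\int_1^\infty e(-\alpha u)u^z\,du$ converges as an oscillatory integral (Lemma~\ref{lem_1} applies on $[1,\infty)$ after noting $u^z$ is monotonic in modulus and $f'(u)=-2\pi\alpha$), and the tail $\int_x^\infty e(-\alpha u)u^z\,du$ is $O(x^{z}/\alpha) = o(1)$ for $z < 0$ — but for $0 \le z \le 1/2$ the tail does not decay, so instead I would first integrate by parts once in $\int_1^x$ to gain a power of $u$: the boundary term at $u=x$ is $e(-\alpha x) x^z/(-2\pi i \alpha)$, which is $O(x^{z})$ and \emph{not} $o(1)$. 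This forces a slightly different bookkeeping: I would instead argue that $\int_1^x e(-\alpha u)u^z\,du - E_{-z}(2\pi i\alpha)$ equals $-\int_x^\infty e(-\alpha u)u^z\,du$ plus the identification of $\int_1^\infty e(-\alpha u)u^z\,du$ with the generalized exponential integral, and then observe that the non-decaying tail $\int_x^\infty e(-\alpha u)u^z\,du$, after integrating by parts, has a boundary term of size $x^z$ which must be \emph{absorbed} — this is where the structure of $G$ matters, and in fact the correct statement is that the boundary term from the tail of the first integral cancels against the boundary contribution from the $\log u$ integral. So the cleaner route is: integrate by parts in the full $G(x;\alpha,z)$ at once, exploiting that the combination $\big(1 - \frac{\log u}{\log x}\big)$ vanishes at $u=x$, so there is \emph{no} boundary term at $u=x$ at all.

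Concretely, with $v = \int_1^u e(-\alpha w)\,dw = \frac{1 - e(-\alpha u)}{2\pi i\alpha}$ and differentiating $u^z(1 - \frac{\log u}{\log x})$, the boundary term at $u=x$ vanishes because of the mollifier, the boundary term at $u=1$ is $0$ because $v(1)=0$, and one is left with $-\int_1^x v(u)\,\frac{d}{du}\big[u^z(1-\tfrac{\log u}{\log x})\big]\,du$. Splitting $v(u) = \frac{1}{2\pi i\alpha} - \frac{e(-\alpha u)}{2\pi i\alpha}$, the first constant piece integrates to an elementary expression that telescopes (again using the vanishing of the mollifier at $x$), giving $\frac{1}{2\pi i\alpha}\cdot 1$-type contributions plus lower order; the second piece $\frac{e(-\alpha u)}{2\pi i\alpha}$ gives back an integral of the same shape but with $z$ replaced by $z-1$, now with a genuinely decaying tail when one extends to $\infty$. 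Iterating once more, or comparing directly with the integral representation of $E_{-z}$ after the substitution $u \mapsto 1+t$ in $\int_1^\infty e(-\alpha u)u^z\,du$ (which yields $e(-\alpha)\int_0^\infty e(-\alpha t)(1+t)^z\,dt$, matching $\Gamma(z+1)E_{-z}(2\pi i\alpha)$ up to the standard gamma-function normalization via the known analytic continuation in \cite[(2.11.5)]{NIST}), produces the stated main term, with all $x$-dependent pieces collected into $O(1/\log x)$.

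I expect the main obstacle to be the bookkeeping around the non-decaying tail $\int_x^\infty e(-\alpha u)u^z\,du$ when $0 \le z \le 1/2$: naively the error is $O(x^z)$, far too large, and the resolution hinges on using the mollifier $1 - \frac{\log u}{\log x}$ to kill boundary terms at $u=x$ before extending any range to infinity. Getting the order of operations right — integrate by parts in $G$ \emph{first}, \emph{then} extend to infinity on the resulting lower-order integrals — is the crux; the identification of the resulting convergent oscillatory integral with $E_{-z}(2\pi i\alpha)$ is then a routine change of variables plus appeal to the analytic continuation of the generalized exponential integral.
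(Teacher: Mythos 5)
Your final strategy (the one you settle on in the third paragraph) is correct, and it is a genuinely different route from the paper's. The paper computes $G(x;\alpha,z)$ exactly: differentiating the lower incomplete gamma function with respect to its order expresses $G(x;\alpha,z)$ as $E_{-z}(2\pi i\alpha)$ plus $(\log x)^{-1}$ times a difference of two Meijer $G$-functions, and the work then goes into bounding that Meijer $G$-term uniformly in $x$ via Mellin--Barnes integrals, with a separate stationary-phase argument through Lemma~\ref{lem_1} for the borderline case $z=1/2$, where the contour integrand decays only like $|v|^{-1}$. You instead integrate by parts once inside $G$ itself, using exactly the observation you highlight: the mollifier $1-\log u/\log x$ vanishes at $u=x$ and the antiderivative of $e(-\alpha u)$ chosen to vanish at $u=1$ kills the other boundary term, so the dangerous $x^z$-size boundary contributions never appear; what remains are oscillatory integrals with exponent $z-1\le -1/2<0$, i.e.\ with decaying, eventually monotone amplitudes, which converge by Lemma~\ref{lem_1} (or Dirichlet's test), have tails $O(x^{z-1})$, and whose $(\log x)^{-1}$-weighted parts are $O(1/\log x)$; this avoids the special-function machinery and the delicate $z=1/2$ analysis altogether. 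Note that your opening plan is not viable as stated ($\int_1^x e(-\alpha u)u^z\log u\,du$ has size about $x^z\log x$ for $0<z\le 1/2$, not $O(\log x)$, and even an $O(1)$ total error would fall short of the required $O(1/\log x)$), but you recognize and repair this yourself via the cancellation at $u=x$. Two small corrections to the repaired argument: $\int_1^u e(-\alpha w)\,dw=\bigl(e(-\alpha)-e(-\alpha u)\bigr)/(2\pi i\alpha)$, so the constant produced by the non-oscillatory piece is $e^{-2\pi i\alpha}/(2\pi i\alpha)$, not $1/(2\pi i\alpha)$; and the identification with $E_{-z}(2\pi i\alpha)$ must be made at the level of the exponent $z-1$, since $\int_1^\infty e(-\alpha u)u^z\,du$ diverges for $z\ge 0$: one uses the representation $E_{1-z}(2\pi i\alpha)=\int_1^\infty e^{-2\pi i\alpha u}u^{z-1}\,du$ (valid for $\Re w>0$ and extended to the imaginary axis by continuation, \cite[\S 8.19]{NIST}) together with the recurrence $w\,E_{-z}(w)=e^{-w}+z\,E_{1-z}(w)$; there is no $\Gamma(z+1)$ normalization factor, and no further iteration is needed. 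With these adjustments your argument closes and gives exactly the statement of the lemma.
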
 

\begin{proof}

We begin with $z \in \C$ satisfying $\Re(z) > -1$, 
and later restrict $z$ to real values in the range $-1< z \leq 1/2$. 
We put $g(x;\alpha,s):=G(x;\alpha,s-1)$. Then we have 
\[
\aligned 
g(x;\alpha,s)
%& = \int_{1}^{x} e(-\alpha u) \left(1 - \frac{\log u}{\log x} \right) \, u^{s-1} \, du \\
%& = \int_{1}^{x} e(-\alpha u) \, u^{s-1} \, du- \frac{1}{\log x}
%\int_{1}^{x} e(-\alpha u) \,  \log u  \, \, u^{s-1} \, du \\
& = 
\int_{1}^{x} e(-\alpha u) \, u^{s-1} \, du 
-  \frac{1}{\log x}\frac{d}{ds}
\int_{1}^{x} e(-\alpha u) \, u^{s-1} \, du
\endaligned 
\]
for $\Re(s)>0$. The integral on the right-hand side is expressed as 
\begin{equation} \label{EQ_302}
\int_{1}^{x} e(-\alpha u)  \, u^{s-1} \, du 
 = (2 \pi i\alpha)^{-s} \gamma(s, 2\pi i\alpha x)-(2 \pi i\alpha)^{-s} \gamma(s, 2\pi i\alpha)
\end{equation} 
by using the lower incomplete gamma function
\[
\gamma(s,z) = \int_{0}^{z} e^{-t} \, t^{s-1} \, dt, \quad \Re(s)>0, 
\]
since it holds that
\begin{align*}
\int_{0}^{x} e(-\alpha u)  \, u^{s-1} \, du
& = (2\pi i\alpha)^{-s} \int_{0}^{2 \pi i \alpha x} e^{-t} t^s \frac{dt}{t} 
 = (2 \pi i\alpha)^{-s} \gamma(s, 2\pi i\alpha x)
\end{align*}
if $\Re(s)>0$ by \cite[8.2 (i) Definitions]{NIST}. 
Noting that \( \Gamma(s,z) = \Gamma(s) - \gamma(s,z) \), and using \cite[(29)]{GGMS90}, 
we have
\begin{equation} \label{EQ_303}
\frac{d}{ds} \gamma(s,z)  
= \frac{d}{ds} \Gamma(s) - \Gamma(s) \log z + \gamma(s,z) \log z - z T(3,s,z),
\end{equation}
where $T(3,s,z)$ is expressed via the Meijer \( G \)-function as
\[
T(3,s,z) = G_{2,3}^{3,0}\left(\left.\begin{matrix} 0,\,0 \\ s-1,\,-1,\,-1 \end{matrix} \right|z\right)
\]
(\cite[(31)]{GGMS90}). 
Using \eqref{EQ_302}, \eqref{EQ_303}, 
and setting $\beta = 2\pi \alpha$, we obtain
\[
\aligned 
g(x;\alpha,s) 
& =  (i\beta)^{-s} \Gamma(s,i\beta) \\
& \quad 
+ \frac{1}{\log x} (i\beta)^{-s} \left[ (i\beta x) T(3,s,i\beta x) - (i\beta) T(3,s,i\beta) \right] \\
& = E_{1-s}(i\beta) \\
& \quad 
+ \frac{1}{\log x} (i\beta)^{-s} \left[ 
G_{2,3}^{3,0}\left(\left.\begin{matrix} 1,\,1 \\ 0,\,0,\,s \end{matrix} \right| i\beta x \right)
- 
G_{2,3}^{3,0}\left(\left.\begin{matrix} 1,\,1 \\ 0,\,0,\,s \end{matrix} \right| i\beta \right)
\right].
\endaligned 
\]
Here, we have used 
\begin{equation} \label{EQ_304}
E_s(w) = w^{s-1}\Gamma(1-s,w)
\end{equation}
(\cite[(8.19.1)]{NIST}) and 
$\displaystyle{
z T(3,s,z) = G_{2,3}^{3,0}\left(\left.\begin{matrix} 1,\,1 \\ 0,\,0,\,s \end{matrix} \right|z\right)
}$
(\cite[(16.19.2)]{NIST}).
Therefore,
\begin{equation} \label{EQ_305}
\aligned 
G(x;\alpha,z)
& =E_{-z}(2 \pi i\alpha) \\
& \quad  + \frac{1}{\log x} (2\pi i\alpha)^{-1-z} \left[ 
G_{2,3}^{3,0}\left(\left.\begin{matrix} 1,\,1 \\ 0,\,0,\,1+z \end{matrix} \right| 2 \pi i\alpha x \right) 
\right. \\
& \qquad \qquad \qquad \qquad \qquad \qquad 
\left. 
-
G_{2,3}^{3,0}\left(\left.\begin{matrix} 1,\,1 \\ 0,\,0,\,1+z \end{matrix} \right| 2 \pi i\alpha \right)
\right]
\endaligned 
\end{equation}
for $z \in \C$ with $\Re(z)>-1$.
\medskip

In the following, we restrict \( z \) to real values in the range $-1 < z \leq 1/2 $.
For $z \geq -1$, $\beta > 0$, and $x > 1$, we have the integral formula
\begin{equation} \label{EQ_306}
G_{2,3}^{3,0}\!\left(\left.\begin{matrix} 1,\,1 \\[2pt] 0,\,0,\,1 + z \end{matrix} \right| i\beta x\right)
= \frac{1}{2\pi i} \int_{c-i\infty}^{c+i \infty} 
\Gamma(1 + z - w) \, \frac{(i\beta x)^w}{w^2} \, dw .
\end{equation}
In this formula, $c$ is chosen so that $c < 0$ and $c > z - 1/2$,  
ensuring that all poles of $\Gamma(-w)$ and $\Gamma(1+z-w)$ 
%, namely  $w = k$ and $w = 1+z+k$ ($k = 0, 1, 2, \dots$), 
lie to the right of the path of integration,  
and that the condition  
\[
(q-p)\left(\Re(w) + \frac{1}{2}\right) >
\Re\!\left(\sum_{j=1}^q b_j - \sum_{i=1}^p a_i \right) + 1
\]
is satisfied for $p = 2$, $q = 3$, $a_1 = a_2 = 1$, $b_1 = b_2 = 0$, and $b_3 = 1+z$,  
since $|\arg(i\beta x)| = \delta \pi$ with $\delta = m + n - (p+q)/2$ for $m = 3$ and $n = 0$  
(see \cite[\S5.2]{Lu69}). 

For $w = i v$ ($ v \in \R$), we have  
$|(i\beta x)^w| = \exp(-(\pi/2)v)$. Therefore, using the Stirling formula (\cite[(5.11.9)]{NIST}), 
\[
\Gamma(1 + z - w) \, \frac{(i\beta x)^w}{w^2} 
\ll_{z} 
|v|^{z+1/2} \exp(-(\pi/2)|v|) \cdot \frac{\exp(-(\pi/2)v)}{|v|^2}
\ll |v|^{z-3/2}
\]
for fixed \( z \geq -1 \). 
Hence, if $-1 \leq z < 1/2$, 
the right-hand side of \eqref{EQ_306} converges absolutely and is bounded 
in $x$, where the bound depending on $z$.  

In the case $z = 1/2$, letting $c \to 0^-$ and noting that  
\[
\Gamma(s) = \sqrt{\frac{2\pi}{s}}\left(\frac{s}{e}\right)^s
\left( 1 + \frac{1}{12s} + \frac{1}{288s^2} + \dots \right)
\]
(\cite[(5.11.4), (5.11.10)]{NIST}),  
it suffices to show that
\begin{equation} \label{EQ_307}
\int_{1}^{\infty} 
\frac{1}{\sqrt{3/2+iv}}\left(\frac{3/2+iv}{e}\right)^{3/2+iv}\, 
\frac{(i\beta x)^{-iv}}{v^2} \, dv 
\end{equation}
is bounded in $x$.
This equals 
\[
\begin{aligned}
%&= e^{-3/2} \int_{1}^{\infty} 
%(3/2+iv)^{1+iv}\, \frac{(e \beta x)^{-iv} \exp((\pi/2)v)}{v^2} \, dv \\
& e^{-3/2} \int_{1}^{\infty} \frac{|3/2+iv|}{v^2}
\exp\bigl( v(\tfrac{\pi}{2} - \arg(3/2+iv)) \bigr) \\
&\qquad \times 
\exp\bigl( i(v \log|3/2+iv| + \arg(3/2+iv)) \bigr) \, (e \beta x)^{-iv} \, dv .
\end{aligned}
\]
Since
\[
\arg\!\left( \frac{3}{2} + i v \right) = \frac{\pi}{2} - \frac{3}{2v} + O(v^{-3}), \quad v>1, 
\]
we get
\[
\eqref{EQ_307}= i \int_{1}^{\infty} \frac{1}{v}
\exp\bigl( i v (\log v - \log (e\beta x)) \bigr) \, dv + O(1).
\]
Applying Lemma~\ref{lem_1} to  
$f(v) = v \bigl(\log v - \log (e\beta x)\bigr)$,  
$g(v) = 1/v$, and the interval $[V, 2V]$ with $V > \beta x$  
shows that the above integral converges.  

We now prove that it is bounded in $x$.
For $v > 1$, the equation \( f'(v) = \log (v / (\beta x)) = 0 \) has the unique solution \( v = \beta x \).
Let $v_0 \,(< \beta x)$ satisfy $g(v)/f'(v) = -1$ and 
$v_1 \,(> \beta x)$ satisfy $g(v)/f'(v) = 1$.
Then \( -1 \leq g(v)/f'(v) < 0 \) on \([1, v_0]\) and \( 0 < g(v)/f'(v) \leq 1 \) on \([v_1, \infty)\).
Moreover, \( (g(v)/f'(v))' = 0 \) only at \( v = \beta x / e \), and \( g(v)/f'(v) \) is monotone on each of
\([1, \beta x / e]\), \( [\beta x / e, v_0] \), and \([v_1, \infty)\).
Hence, by Lemma~\ref{lem_1}, the integrals of \( g(v) e^{i f(v)} \) 
over these intervals are bounded in $x$. 
Finally, for the remaining part we have
\[
\left| \int_{v_0}^{v_1} g(v) e^{if(v)} \, dv \right| 
\leq  \int_{v_0}^{v_1} | g(v) | \, dv 
\leq \log v_1 - \log v_0 .
\]
By the definitions of $v_0$ and $v_1$, the right-hand side equals $v_1^{-1} + v_0^{-1}$, and since both $v_0$ and $v_1$ increase as $x$ increases, $v_1^{-1} + v_0^{-1}$ is bounded with respect to $x$.
\end{proof}

\begin{lemma} \label{lem_3} 
Let $x>1$ and $\alpha>0$. 
Then we have
\begin{equation} \label{EQ_308}
\aligned 
G(x; \alpha,iy) 
& = (2\pi i\alpha)^{-1-iy}\,\Gamma(1+iy)   \\
& \qquad \times 
 \left( 1 + \frac{1}{\log x}\left( \log (2\pi i \alpha) - \frac{\Gamma'}{\Gamma}(1+iy) \right) \right) 
\\
& \quad  -\frac{e^{-2\pi i \alpha}}{1+iy}{}_1F_1(1;2+iy;  2\pi i \alpha) \\
& \quad 
+ O\left(\frac{1}{\log x}\,|y|^{-2}\right)
+ O\left(\frac{x^{-\delta}}{\log x}\,|y|^{-(1/2-\delta)}\right) \\
\endaligned 
\end{equation}
as $|y| \to \infty$ for real $y$ and for any $0<\delta<1/2$, 
where ${}_1F_1(a; c; z)$ denotes the Kummer confluent hypergeometric function, 
and the implied constants may depend on $\delta$.
The first term on the right-hand side of \eqref{EQ_308} admits the following asymptotic formula
\begin{equation} \label{EQ_309}
\aligned 
(2\pi i\alpha)^{-1-iy}
& \,\Gamma(1+iy)  
 \left( 1 + \frac{1}{\log x}\left( \log (2\pi i \alpha) - \frac{\Gamma'}{\Gamma}(1+iy) \right) \right) \\
& = \frac{1}{2\pi i \alpha}\,e^{\pi i/4} \sqrt{2\pi|y|} \, 
e\left(
\frac{|y|}{2\pi}\log \frac{|y|}{2\pi \alpha e} 
\right) \\
& \qquad \times 
 \left( 1 - \frac{1}{\log x}\log\frac{|y|}{2\pi \alpha} \right) 
\left( 
1 +O(|y|^{-1})
\right) 
\exp\left(
-\frac{\pi}{2}(|y|-y)
\right)
\endaligned 
\end{equation}
as $|y| \to \infty$, where the implied constant is absolute.
\end{lemma}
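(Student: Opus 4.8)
The plan is to reduce \eqref{EQ_308} to a single oscillatory-integral estimate and then read off \eqref{EQ_309} from Stirling's formula; write $\beta=2\pi\alpha$ throughout. Substituting $u=xw$ in the definition of $G(x;\alpha,z)$ (equivalently: specialize \eqref{EQ_305} to $z=iy$ and simplify the two Meijer $G$-functions via \eqref{EQ_303} together with $\tfrac{\partial}{\partial s}\gamma(s,z)=\gamma(s,z)\log z+z^{s}\!\int_{0}^{1}e^{-zv}v^{s-1}\log v\,dv$) yields the exact identity
\[
G(x;\alpha,iy)=-\frac{x^{1+iy}}{\log x}\int_{0}^{1}e^{-i\beta x w}w^{iy}\log w\,dw
+\frac{1}{\log x}\int_{0}^{1}e^{-i\beta v}v^{iy}\log v\,dv
-\frac{e^{-i\beta}}{1+iy}\,{}_1F_1(1;2+iy;i\beta),
\]
the last term coming from $\int_{0}^{1}e^{-i\beta v}v^{iy}\,dv=(i\beta)^{-1-iy}\gamma(1+iy,i\beta)$ and the Kummer formula $\gamma(s,z)=s^{-1}z^{s}e^{-z}\,{}_1F_1(1;s+1;z)$ (\cite[(8.5.1)]{NIST}); it is already the third term of \eqref{EQ_308}. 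So it remains to estimate the two integrals.

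The second integral is elementary: $\log v$ vanishes at $v=1$ and $v^{iy}\log v$, $v^{1+iy}$ vanish at $v=0$, so integrating by parts twice — via $v^{iy}\,dv=d(v^{1+iy})/(1+iy)$ and then $v^{1+iy}\,dv=d(v^{2+iy})/(2+iy)$ — produces two factors $O(|y|^{-1})$ against absolutely bounded integrands, giving $\int_{0}^{1}e^{-i\beta v}v^{iy}\log v\,dv=O(|y|^{-2})$ and hence the term $O((\log x)^{-1}|y|^{-2})$ in \eqref{EQ_308}.

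The substance is the first integral, and the claim is
\[
-\frac{x^{1+iy}}{\log x}\int_{0}^{1}e^{-i\beta x w}w^{iy}\log w\,dw
=(i\beta)^{-1-iy}\Gamma(1+iy)\Bigl(1+\tfrac{1}{\log x}\bigl(\log(i\beta)-\tfrac{\Gamma'}{\Gamma}(1+iy)\bigr)\Bigr)
+O\!\Bigl(\tfrac{x^{-\delta}}{\log x}|y|^{-(1/2-\delta)}\Bigr).
\]
The phase $y\log w-\beta x w$ has the single stationary point $w_{0}=y/(\beta x)$, lying in $(0,1)$ exactly when $0<y<\beta x$; its saddle-point contribution, after comparison with $\Gamma(1+iy)$ through Stirling, reproduces the displayed main term (the correction factor arises because $\log w_{0}=\log(y/(\beta x))$ and $\tfrac{\Gamma'}{\Gamma}(1+iy)-\log(i\beta x)\sim\log(y/(\beta x))$, with $\log(i\beta x)=\log(i\beta)+\log x$). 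To get a clean, uniform error term I would instead route this through the Mellin--Barnes representation \eqref{EQ_306} — which extends to $z=iy$ by analytic continuation, the integrand being absolutely convergent by Stirling — placing the contour on $\Re(w)=-\delta$, $0<\delta<1/2$ (so $|(i\beta x)^{w}|=(\beta x)^{-\delta}$), estimating $\Gamma(1+\delta+i(y-v))$ on that line by Stirling against the $|w|^{-2}$ decay, the substitution $v\mapsto y-v$ splitting the range near the saddle from the tails, and extracting the main term as the residue of the double pole at $w=0$ (which is exactly $\Gamma(1+iy)(\log(i\beta x)-\tfrac{\Gamma'}{\Gamma}(1+iy))$). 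The hard part — the one step that is not bookkeeping — is making this estimate uniform in $x$ and in $|y|$ simultaneously, especially in the transitional range $|y|\asymp\beta x$, where $w_{0}$ collides with the endpoint $w=1$ and the saddle of the Mellin--Barnes integrand approaches the contour; there the naive absolute-value bound on the $\Gamma$-factor is too lossy and one must track the oscillation, via Lemma~\ref{lem_1} or a careful stationary-phase expansion.

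Finally, \eqref{EQ_309} is obtained from Stirling's formula. Applying \cite[(5.11.9)]{NIST} to $\Gamma(1+iy)$ gives, for $y>0$, the classical asymptotic $\Gamma(1+iy)=\sqrt{2\pi y}\,e^{\pi i/4}e^{-\pi y/2}\,e\!\bigl(\tfrac{y}{2\pi}\log\tfrac{y}{e}\bigr)\bigl(1+O(|y|^{-1})\bigr)$, together with $\tfrac{\Gamma'}{\Gamma}(1+iy)=\psi(1+iy)=\log(iy)+O(|y|^{-1})$ (\cite[(5.11.2)]{NIST}) and $(i\beta)^{-1-iy}=\beta^{-1}e^{\pi y/2}\exp\!\bigl(-i(\tfrac{\pi}{2}+y\log\beta)\bigr)$; multiplying and collecting modulus and phase produces the right-hand side of \eqref{EQ_309}, with the factor $\exp\!\bigl(-\tfrac{\pi}{2}(|y|-y)\bigr)=e^{\pi y/2}e^{-\pi|y|/2}$ recording the combination of the growth of $|(i\beta)^{-1-iy}|$ with the decay of $|\Gamma(1+iy)|$ (so that the whole expression is exponentially small when $y<0$, which is all that is used later), and the correction factor coming from $1+\tfrac{1}{\log x}\bigl(\log(i\beta)-\psi(1+iy)\bigr)=\bigl(1-\tfrac{1}{\log x}\log\tfrac{|y|}{2\pi\alpha}\bigr)\bigl(1+O(|y|^{-1})\bigr)$. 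This last paragraph is routine, if a little lengthy.
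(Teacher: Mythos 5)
Your reduction is essentially the paper's own: the identity you get from $u=xw$ is equivalent to \eqref{EQ_305}, your third term is exactly the ${}_1F_1$-term of \eqref{EQ_308} (via $\gamma(s,z)=s^{-1}z^se^{-z}\,{}_1F_1(1;s+1;z)$, which is how the paper uses \cite[(8.5.1), (8.19.1)]{NIST}), your integration-by-parts bound $\int_0^1 e^{-2\pi i\alpha v}v^{iy}\log v\,dv=O(|y|^{-2})$ replaces, somewhat more elementarily, the paper's treatment of the Meijer $G$-function at argument $2\pi i\alpha$ (contour shift picking up residues at $w=0,1,2,\dots$), and your Stirling computation of \eqref{EQ_309}, including the factor $\exp(-\tfrac{\pi}{2}(|y|-y))$, is the intended argument and is fine. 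The defect is that the one estimate carrying all the content of \eqref{EQ_308} is not proved: the claim that $-\frac{x^{1+iy}}{\log x}\int_0^1 e^{-2\pi i\alpha xw}w^{iy}\log w\,dw$ equals the displayed main term up to $O\bigl(\frac{x^{-\delta}}{\log x}|y|^{-(1/2-\delta)}\bigr)$, equivalently that $\frac{1}{\log x}(2\pi i\alpha)^{-1-iy}G_{2,3}^{3,0}\bigl(\left.\begin{smallmatrix}1,\,1\\ 0,\,0,\,1+iy\end{smallmatrix}\right|2\pi i\alpha x\bigr)\ll_\delta \frac{x^{-\delta}}{\log x}|y|^{-(1/2-\delta)}$ uniformly in $x$ and $y$. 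You outline the route (Mellin--Barnes representation, residue of the double pole producing $\Gamma(1+iy)(\log(2\pi i\alpha x)-\frac{\Gamma'}{\Gamma}(1+iy))$, Stirling on the shifted contour) and then explicitly concede that making it uniform, ``especially in the transitional range $|y|\asymp 2\pi\alpha x$,'' is the hard part and leave it undone. That step is precisely where the lemma lives; everything else you did is bookkeeping, so as written the proposal does not establish \eqref{EQ_308}.

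For comparison, the paper disposes of this step in its final display: it uses the representation \eqref{EQ_310} with the contour at $\Re(w)=-1-\delta$ (so $x^{1+iy+w}$ contributes the factor $x^{-\delta}$) and then estimates $\int_{-\infty}^{\infty}|\Gamma(1+\delta-iv)|\,|{-\delta}+i(y+v)|^{-2}\,dv$ by Stirling. The region you are worried about --- $v\approx -y$ on the shifted line, where the double-pole factor $|{-\delta}+i(y+v)|^{-2}$ is large and, as you say, a naive absolute-value bound on the $\Gamma$-factor looks lossy --- is exactly the region your proof must handle, whether by exploiting the exponential factors ($|\Gamma(1+\delta-iv)|\ll |v|^{\delta+1/2}e^{-\pi|v|/2}$ against $|(2\pi i\alpha)^{iv}|=e^{-\pi v/2}$), by oscillation in $v$ via Lemma~\ref{lem_1}, or by a genuine stationary-phase analysis of $\int_0^1 e^{-2\pi i\alpha xw}w^{iy}\log w\,dw$ near $w_0=y/(2\pi\alpha x)$. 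Supplying that uniform estimate (with the stated dependence only on $\delta$ and $\alpha$) is what is needed to close the gap; note also that the bound must hold for all real $y$ with $|y|$ large, not only for $0<y<2\pi\alpha x$, since \eqref{EQ_308} is invoked for negative $y=\gamma-t$ as well in the proof of Proposition~\ref{prop_3}.
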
 
\begin{proof} Let $z=iy$. 
By combining \cite[(8.5.1), (8.19.1)]{NIST} with $\Gamma(s,z)=\Gamma(s)-\gamma(s,z)$, we have
\[
E_{-z}(2\pi i\alpha) 
= (2\pi i\alpha) ^{-1-z} \Gamma(1+z)
 -\frac{e^{-2\pi i \alpha}}{1+z}{}_1F_1(1,2+z,  2\pi i \alpha).
\]
Substituting this into the first term on the right-hand side of \eqref{EQ_305}, we obtain
\[
\aligned 
G(x;\alpha,z)
& =(2\pi i\alpha)^{-1-z}\Gamma(1+z)   \\
& \qquad \times 
 \left( 1 + \frac{1}{\log x}\left( \log (2\pi i \alpha) - \frac{\Gamma'}{\Gamma}(1+z) \right) \right) \\
& \quad -\frac{e^{-2\pi i \alpha}}{1+z}{}_1F_1(1,2+z,  2\pi i \alpha) \\
& \quad  + \frac{1}{\log x} (2\pi i\alpha)^{-1-z} 
G_{2,3}^{3,0}\left(\left.\begin{matrix} 1,\,1 \\ 0,\,0,\,1+z \end{matrix} \right| 2 \pi i\alpha x \right) 
\\
& \quad 
-  \frac{1}{\log x} (2\pi i\alpha)^{-1-z} \left[
G_{2,3}^{3,0}\left(\left.\begin{matrix} 1,\,1 \\ 0,\,0,\,1+z \end{matrix} \right| 2 \pi i\alpha \right)
\right. 
 \\
& \qquad \qquad \qquad \qquad \qquad + 
\left.
\Gamma(1+z)   
  \left( \log (2\pi i \alpha) - \frac{\Gamma'}{\Gamma}(1+z) \right)
\right].
\endaligned 
\]
We evaluate the latter terms on the right-hand side that contain $(\log x)^{-1}$ as a factor. 
By \cite[(16.19.2)]{NIST}, we have
\[
(2\pi i\alpha)^{-1-z}  
G_{2,3}^{3,0}\left(\left.\begin{matrix} 1,\,1 \\ 0,\,0,\,1+z \end{matrix} \,\right| 2 \pi i\alpha x \right) 
= 
x^{1+z}\,
G_{2,3}^{3,0}\left(\left.\begin{matrix} -z,\,-z \\ -1-z,\,-1-z,\,0 \end{matrix} \,\right| 2 \pi i\alpha x \right) .
\]
On the right-hand side, we have
\[
x^{1+z}\,
G_{2,3}^{3,0}\left(\left.\begin{matrix} -z,\,-z \\ -1-z,\,-1-z,\,0 \end{matrix} \,\right| 2 \pi i\alpha x \right)
=
\frac{1}{2\pi i}
\int_{c-i\infty}^{c+i\infty} \frac{\Gamma(-w)(2\pi i \alpha)^w x^{1+z+w}}{(1+z+w)^2} \, dw
\]
from \cite[\S5.2]{Lu69} again.   
Here the contour is chosen so that the poles of $\Gamma(-1-z-w)$ and $\Gamma(-w)$ 
%,  namely $w=m-1-z$ and $w=m$ ($m=0,1,\dots$), 
lie to the right of the contour, that is $c<-1$. Therefore,
\begin{equation} \label{EQ_310}
\aligned 
(2\pi i\alpha)^{-1-z} 
& G_{2,3}^{3,0}\left(\left.\begin{matrix} 1,\,1 \\ 0,\,0,\,1 +z \end{matrix} \,\right| 2 \pi i\alpha x\right) \\
& = \frac{1}{2\pi i}
\int_{c-i\infty}^{c+i\infty} \frac{\Gamma(-w)(2\pi i \alpha)^w x^{1+z+w}}{(1+z+w)^2} \, dw.
\endaligned 
\end{equation}
When $x=1$, shifting the contour to $\Re(w)=c'$ with $-1 < c' <0$ 
takes the residue at $w=-1-z$, giving
\[
\aligned  
\, & 
(2\pi i\alpha)^{-1-z} 
G_{2,3}^{3,0}\left(\left.\begin{matrix} 1,\,1 \\ 0,\,0,\,1 +z \end{matrix} \,\right| 2 \pi i\alpha \right) \\
& \qquad +(2\pi i\alpha)^{-1-z}\Gamma(1+z)   
  \left( \log (2\pi i \alpha) - \frac{\Gamma'}{\Gamma}(1+z) \right) 
\\
& =  \frac{1}{2\pi i}
\int_{c'-i\infty}^{c'+i\infty} \frac{\Gamma(-w)(2\pi i \alpha)^w x^{1+z+w}}{(1+z+w)^2} \, dw.
\endaligned 
\]
Shifting the contour further to the right (which is justified by the Stirling formula),  
we first take the residue at $w=0$, then at $w=n$ ($n \in \mathbb{Z}_{>0}$) successively.  
Thus,
\[
\aligned
\, & 
(2\pi i\alpha)^{-1-z} 
G_{2,3}^{3,0}\left(\left.\begin{matrix} 1,\,1 \\ 0,\,0,\,1 +z \end{matrix} \,\right| 2 \pi i\alpha \right) \\
& \qquad +(2\pi i\alpha)^{-1-z}\Gamma(1+z)   
  \left( \log (2\pi i \alpha) - \frac{\Gamma'}{\Gamma}(1+z) \right) 
 \ll  \frac{1}{y^2}. 
\endaligned 
\]
It remains to bound
\[
(2\pi i\alpha)^{-1-z}  
G_{2,3}^{3,0}\left(\left.\begin{matrix} 1,\,1 \\ 0,\,0,\,1+z \end{matrix} \,\right| 2 \pi i\alpha x \right).
\]
By \cite[(16.19.2)]{NIST}, taking $c=-1-\delta$ in the integral representation \eqref{EQ_310}, 
the Stirling formula gives
\[
\aligned 
\, & (2\pi i\alpha)^{-1-z}  
G_{2,3}^{3,0}\left(\left.\begin{matrix} 1,\,1 \\ 0,\,0,\,1+z \end{matrix} \,\right| 2 \pi i\alpha x \right) \\
& \quad =
\frac{1}{2\pi}
\int_{-\infty}^{\infty} \frac{\Gamma(1+\delta-iv) 
(2\pi i \alpha)^{-1-\delta+iv} x^{-\delta+i(y+v)}}{(-\delta+i(y+v))^2} \, dv \\
& \quad \ll x^{-\delta}
\int_{-\infty}^{\infty} 
\frac{|1+\delta-iv|^{\delta+1/2}}{|\delta-i(y+v)|^2} \, dv 
\ll_\delta x^{-\delta} y^{-1/2+\delta}.
\endaligned 
\]
Combining the above estimates yields \eqref{EQ_308}.
\end{proof}

Since the estimate \eqref{EQ_308} for $G(x;\alpha,iy)$ 
is inadequate for the purposes of later applications when $y > 0$ is large, 
we introduce the following lemma as an alternative.

\begin{lemma} \label{lem_4}
Let $x>e^2$, $\alpha>0$, and $y>0$. Then we have 
\begin{equation} \label{EQ_311}
G(x;\alpha,iy)
\ll
\begin{cases}
\displaystyle{
\frac{x}{\log x} \cdot \frac{1}{y}} & \text{for}~ 
2\pi \alpha x <y\leq 2\pi \alpha x + \sqrt{2\pi \alpha x}, \\[10pt]
\displaystyle{
\frac{1}{y} + \frac{x}{\log x} \cdot \frac{1}{(y-2\pi\alpha x)^2} 
 } & \text{for}~ y > 2\pi \alpha x + \sqrt{2\pi \alpha x}, 
\end{cases}
\end{equation}
where the implied constant may depend on $\alpha$. 
\end{lemma}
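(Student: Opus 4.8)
The plan is to estimate $G(x;\alpha,iy)$ directly as the oscillatory integral
\[
G(x;\alpha,iy)=\int_{1}^{x} g(u)\,e^{if(u)}\,du,\qquad
g(u):=1-\frac{\log u}{\log x}=\frac{\log(x/u)}{\log x},\quad f(u):=-2\pi\alpha u+y\log u,
\]
so that $f'(u)=(y-2\pi\alpha u)/u$ and $f''(u)=-y/u^{2}$. In both ranges $y>2\pi\alpha x$, hence $f'>0$ on $[1,x]$ (there is no stationary point), $f'$ is decreasing, $f'(1)=y-2\pi\alpha\asymp y$ and $f'(x)=(y-2\pi\alpha x)/x$. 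The decisive feature is that the mollifier $g$ vanishes at $u=x$, precisely where $f'$ is smallest and where, when $y$ is close to $2\pi\alpha x$, the exterior stationary point $y/(2\pi\alpha)$ is nearby; this vanishing is what makes $G$ small, and it must be brought out through integrations by parts whose boundary terms at $u=x$ disappear. Throughout I would assume $x$ large in terms of $\alpha$; for bounded $x$ the asserted bounds are either vacuous (in the first range $y$ then ranges over a bounded set) or follow by a simpler version of the computation below.

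For the second range $y>2\pi\alpha x+\sqrt{2\pi\alpha x}$ — so $y-2\pi\alpha x>\sqrt{2\pi\alpha x}$, whence $\log\tfrac{y}{y-2\pi\alpha x}\ll_{\alpha}\log x$ — I would integrate by parts once. Since $g(x)=0$,
\[
G=-\frac{g(1)}{if'(1)}e^{if(1)}-\int_{1}^{x}\frac{g'(u)}{if'(u)}\,e^{if(u)}\,du
+\int_{1}^{x}\frac{g(u)f''(u)}{if'(u)^{2}}\,e^{if(u)}\,du,
\]
where the boundary term is $\ll 1/y$. In the first remaining integral the amplitude is $g'/f'=-1/(\log x\,(y-2\pi\alpha u))$; here $g'/f'^{2}=-u/(\log x\,(y-2\pi\alpha u)^{2})$ is monotone on $[1,x]$ and $|f'^{2}/g'|=\log x\,(y-2\pi\alpha u)^{2}/u\ge \log x\,(y-2\pi\alpha x)^{2}/x$, so Lemma~\ref{lem_1} gives the bound $4x/(\log x\,(y-2\pi\alpha x)^{2})$, which is exactly the second term asserted. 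For the second integral, whose amplitude $g f''/f'^{2}=-y\log(x/u)/(\log x\,(y-2\pi\alpha u)^{2})$ is not monotone enough for a single use of Lemma~\ref{lem_1}, I would integrate by parts a second time; because $g(x)=0$ the new boundary term at $u=x$ again vanishes (at $u=1$ it is $\ll 1/y^{2}$), and the remaining integral is estimated in absolute value, its integrand being
\[
O_{\alpha}\!\Big(\frac{y}{\log x}\cdot\frac{\log(x/u)+1}{(y-2\pi\alpha u)^{3}}+\frac{y}{\log x}\cdot\frac{u\log(x/u)}{(y-2\pi\alpha u)^{4}}\Big).
\]
Each piece is integrated using $\int_{1}^{x}(y-2\pi\alpha u)^{-k}du\ll(y-2\pi\alpha x)^{-(k-1)}$ together with one ordinary (non-oscillatory) integration by parts and partial fractions to handle the $\log(x/u)$-weighted integrals (which reduce to $\int_{1}^{x}du/(u\,(y-2\pi\alpha u)^{k-1})$). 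Assembling everything and using repeatedly the elementary inequality $\frac{y}{\log x\,(y-2\pi\alpha x)^{2}}\ll_{\alpha}\frac1y+\frac{x}{\log x\,(y-2\pi\alpha x)^{2}}$ (split according to whether $y-2\pi\alpha x$ is at most, or larger than, $x$) and $\log\tfrac{y}{y-2\pi\alpha x}\ll_{\alpha}\log x$, every contribution is $\ll_{\alpha}\frac1y+\frac{x}{\log x\,(y-2\pi\alpha x)^{2}}$.

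For the first range $2\pi\alpha x<y\le 2\pi\alpha x+\sqrt{2\pi\alpha x}$ the quantity $x/(\log x\,(y-2\pi\alpha x)^{2})$ can far exceed the target $x/(y\log x)\asymp_{\alpha}1/\log x$, so the computation must first be localized away from $u=x$. I would split $[1,x]=[1,x_{0}]\cup[x_{0},x]$ with $x_{0}:=x-\sqrt{x/(2\pi\alpha)}$, chosen so that $y-2\pi\alpha x_{0}=(y-2\pi\alpha x)+\sqrt{2\pi\alpha x}\ge\sqrt{2\pi\alpha x}$. On $[x_{0},x]$ one bounds trivially, $\int_{x_{0}}^{x}|g(u)|\,du=\frac{1}{\log x}\int_{x_{0}}^{x}\log(x/u)\,du\ll\frac{x\,(\log(x/x_{0}))^{2}}{\log x}\ll_{\alpha}\frac{1}{\log x}$, since $\log(x/x_{0})\ll(\alpha x)^{-1/2}$. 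On $[1,x_{0}]$ one runs the second-range argument verbatim, with the upper limit $x$ replaced by $x_{0}$ but $g$ left unchanged: every occurrence of $y-2\pi\alpha x$ becomes $y-2\pi\alpha x_{0}\ge\sqrt{2\pi\alpha x}$, so for instance the analogue of the Lemma~\ref{lem_1} bound becomes $x_{0}/(\log x\,(y-2\pi\alpha x_{0})^{2})\le 1/(2\pi\alpha\log x)$, and all the other terms come out $\ll_{\alpha}1/\log x$ as well. Since $y\asymp_{\alpha}x$ throughout this range, $1/\log x\asymp_{\alpha}x/(y\log x)$, which is the claimed bound.

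The hard part is simply that crude estimation is far too lossy: replacing the oscillatory integrals by the integral of the absolute value, applying a single integration by parts, or invoking the plain second-derivative van der Corput estimate all overshoot by a full power of $\log x$ (and, for van der Corput, by a power of $x$). All the saving cancellation is concentrated near $u=x$ and is produced by the vanishing of $g$ there, so it can only be extracted by integrating by parts twice with the boundary contributions at $u=x$ vanishing, while carefully tracking the amplitude $u^{k}\log(x/u)/(y-2\pi\alpha u)^{k+1}$ and its vanishing at $u=x$. The bookkeeping of the elementary integrals $\int(y-2\pi\alpha u)^{-k}du$ and $\int\log(x/u)(y-2\pi\alpha u)^{-k}du$, and the choice of the cut point $x_{0}$ at the natural scale $\sqrt{x/(2\pi\alpha)}$ in the first range, are the laborious but routine ingredients.
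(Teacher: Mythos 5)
Your argument is essentially correct, but it is a genuinely different execution from the paper's. The paper first rescales $u\mapsto u/x$, writing $G(x;\alpha,iy)=-\frac{x^{1+iy}}{\log x}\int_{1/x}^{1}\log u\,e^{i(-2\pi\alpha xu+y\log u)}\,du$, and then gets both ranges from Lemma~\ref{lem_1} alone: in the range $2\pi\alpha x<y\le 2\pi\alpha x+\sqrt{2\pi\alpha x}$ it applies the first-derivative test directly to the weight $g/f'$, after locating its single interior critical point and showing the maximum of $|g/f'|$ is at most $1/y$; in the range $y>2\pi\alpha x+\sqrt{2\pi\alpha x}$ it integrates by parts once (the boundary term at the endpoint where the weight vanishes disappears, exactly as in your first step) and applies Lemma~\ref{lem_1} to $(g/f')'$, again via a monotonicity/extremum analysis giving the bound $1/(y-2\pi\alpha x)^2$. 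You instead stay on $[1,x]$, use Lemma~\ref{lem_1} only for the $g'/f'$ piece, and replace the paper's delicate monotonicity analysis by a second integration by parts followed by absolute estimation with partial-fraction bookkeeping, plus the splitting at $x_0=x-\sqrt{x/(2\pi\alpha)}$ with a trivial bound on $[x_0,x]$ in the first range. What the paper's route buys is economy: no interval splitting, no second integration by parts, and no need for the inequalities $\frac{y}{\log x\,(y-2\pi\alpha x)^2}\ll_\alpha\frac1y+\frac{x}{\log x\,(y-2\pi\alpha x)^2}$ and $\log\frac{y}{y-2\pi\alpha x}\ll_\alpha\log x$; what yours buys is avoiding the somewhat fiddly verification that $g/f'$ and $(g/f')'/f'$ are piecewise monotone with controlled extrema. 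I checked your key computations (monotonicity of $u/(y-2\pi\alpha u)^2$, the partial-fraction reductions, and the two elementary inequalities) and they do yield the stated bounds.

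One point you gloss over: on $[1,x_0]$ the second-range argument is not quite ``verbatim'', because your integrations by parts relied on $g(x)=0$ to kill the boundary terms at the upper endpoint, and $g(x_0)\neq 0$. The new boundary terms at $u=x_0$ are harmless --- the first is $\ll g(x_0)/f'(x_0)\ll_\alpha \frac{1}{\sqrt{\alpha x}\log x}\cdot\sqrt{x/\alpha}\ll_\alpha\frac{1}{\log x}$, and the second is similarly $\ll_\alpha 1/\log x$ using $y\asymp_\alpha x$ and $y-2\pi\alpha x_0\ge\sqrt{2\pi\alpha x}$ --- but they must be written down and estimated; as stated, your sentence claiming all terms come out $\ll_\alpha 1/\log x$ silently includes them.
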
 
\begin{proof}
Let $y>0$, and set $\beta = 2\pi\alpha$ as before.  
Define $f(u) = -\beta x u + y \log u$ and $g(u) = \log u$, and set
\[
H(x;\beta,y) := \int_{1/x}^{1} g(u) e^{i f(u)} \, du.
\]
Then
\[
G(x; \alpha, i y) = - \frac{x^{1+iy}}{\log x} \, H(x; \beta, y).
\]

First, we evaluate the integral $H(x; \beta, y)$ for 
$\beta x < y \leq \beta x + \sqrt{\beta x}$.
Since $y > \beta x$, we have
$f'(u) = -\beta x + y/u \neq 0$ on the integration interval $[1/x, 1]$
(since $f'(u) = 0$ would imply $u = y / (\beta x) \leq 1$), 
and hence
\[
\frac{g(u)}{f'(u)} = \frac{u\log u}{\,y - \beta x u\,}
\]
is bounded there.  
We examine the monotonicity of this function. Its derivative is
\[
\left( \frac{g(u)}{f'(u)} \right)' 
= \frac{y - \beta x u + y \log u}{(y - \beta x u)^2}.
\]
If $y > \beta x$, then we have $f'(u) = -\beta x + y/u > 0$ for $0 < u < 1$, 
so the numerator in 
$(g(u)/f'(u))'$ vanishes for at most one point $u_0$, which satisfies
$\log u = -1 + (\beta x u)/y$.  
Thus $u_0 > 1/e$, and we have $1/x \leq u_0 \leq 1$ if $x > e$.  
Hence, if $x > e$ and $y > \beta x$, there exists exactly one point $u_0 \in [1/x,1]$ where 
$(g(u)/f'(u))'=0$.  
Dividing $[1/x,1]$ into $[1/x,u_0]$ and $[u_0,1]$, 
we see that $g(u)/f'(u)$ is monotone on each interval.

At $u_0$, we have $y - \beta x u_0 + y \log u_0 = 0$,
and thus
\[
\frac{g(u_0)}{f'(u_0)} 
= \frac{u_0\log u_0}{\,y - \beta x u_0\,}
= - \frac{u_0 \log u_0}{y \log u_0} 
= -\frac{u_0}{y}.
\]
Since $1/e < u_0 < 1$,
\[
\frac{1}{ey} \leq \left| \frac{g(u_0)}{f'(u_0)} \right| \leq \frac{1}{y}.
\]
At the endpoints $u=1/x$ and $u=1$, 
\[
\frac{g(1/x)}{f'(1/x)} = -\frac{\log x}{x} \cdot \frac{1}{y - \beta},
\quad
\frac{g(1)}{f'(1)} = 0.
\]
A direct check (using $x > 1$) shows that 
the maximal value of $|g(u)/f'(u)|$ in the interval $[1/x,1]$ is $u_0 / y \leq 1/y$.  
Applying Lemma~\ref{lem_1} to $f(u)$ and $g(u)$ on each interval 
yields the first part of the estimate in~\eqref{EQ_311}.

Second, we evaluate the integral $H(x; \beta, y)$ for 
$y > \beta x + \sqrt{\beta x}$. 
By $y > \beta x$, we have $g(1) = 0$ and $f'(1) = x (y - \beta) \neq 0$, hence
\[
\aligned
H(x; \beta, y) 
&= \int_{1/x}^1 \frac{g(u)}{i f'(u)} \, \frac{d}{du} e^{i f(u)} \, du \\
&= - \frac{g(1/x)}{i f'(1/x)} \, e^{i f(1/x)}
  - \int_{1/x}^1 \left( \frac{g(u)}{i f'(u)} \right)' e^{i f(u)} \, du \\
&= -\frac{\log x}{x^{1+iy}} \cdot \frac{e^{-i\beta}}{i (y - \beta)}
  - \frac{1}{i} \int_{1/x}^1 \left( \frac{g(u)}{f'(u)} \right)' e^{i f(u)} \, du.
\endaligned
\]

Let $G(u) = (g(u)/f'(u))'$. For the integrand on the right-hand side we have
\[
\left( \frac{G(u)}{f'(u)} \right)' 
= \frac{ (y - \beta x u)(2y + \beta x u) 
       + y (y + 2\beta x u) \log u }
       {(y - \beta x u)^4}.
\]
The numerator vanishes when
\[
\log u = \frac{-2 + (\beta x u / y) \bigl(1 + \beta x u / y \bigr)}
              {1 + 2(\beta x u / y)},
\]
and if $0 < \beta x u / y < 1$ this quantity lies strictly between $-2$ and $0$.  
Thus, if $u_1 \in [1/x,1]$ satisfies 
$(G(u)/f'(u))' = 0$, then $1/e^2 < u_1 < 1$.  
In particular, if $x > e^2$, then $1/x < u_1 < 1$, and 
$G(u)/f'(u)$ is monotone on $[1/x,u_1]$ and $[u_1,1]$. 

At the endpoints $u=1/x$ and $u=1$, 
\[
\frac{G(1/x)}{f'(1/x)}
= \frac{1}{x(y - \beta)^2} - \frac{\log x}{x} \cdot \frac{y}{(y - \beta)^3},
\quad
\frac{G(1)}{f'(1)} = \frac{1}{(y - \beta x)^2}.
\]
At $u_1$,
\[
\frac{G(u_1)}{f'(u_1)}
= \frac{u_1 \bigl( (y - \beta x u_1) + y \log u_1 \bigr)}
       {(y - \beta x u_1)^3}.
\]
From $(y - \beta x u_1)(2y + \beta x u_1) 
+ y (y + 2\beta x u_1) \log u_1 = 0$ we deduce
\[
y - \beta x u_1 
= - \frac{y (y + 2\beta x u_1) \log u_1}{\,2y + \beta x u_1\,},
\]
and substituting this for one occurrence of $y - \beta x u_1$ 
in both the numerator and the denominator 
of $G(u_1)/f'(u_1)$ 
yields
\[
\frac{G(u_1)}{f'(u_1)}
= - \frac{u_1}{(y - \beta x u_1)(y + 2\beta x u_1)}.
\]
Since $0 < 1/x < u_1 < 1$ and $y - \beta x > 0$,
\[
\left| \frac{G(u_1)}{f'(u_1)} \right|
\leq \frac{1}{(y - \beta x u_1)(y + 2\beta x u_1)}
\leq \frac{1}{(y - \beta x)^2}
= \frac{G(1)}{f'(1)}.
\]
Moreover, for $x > e^2$,
\[
\left| \frac{G(1/x)}{f'(1/x)} \right|
= \frac{1}{(y - \beta)^2} \cdot \frac{1}{x}
  \left( \frac{y \log x}{y - \beta} - 1 \right)
< \frac{\log x}{x (y - \beta)^2}
< \frac{1}{(y - \beta x)^2}
= \frac{G(1)}{f'(1)}.
\]
Applying Lemma~\ref{lem_1} to $f(u)$ and $G(u)$ on the two intervals 
$[1/x,u_1]$ and $[u_1,1]$ yields the second part of the estimate in~\eqref{EQ_311}. 
\end{proof} 

\section{Proofs of Theorems \ref{thm_1} and \ref{thm_0724}} \label{section_4}

\subsection{Proof of Theorem \ref{thm_1}}

We note that  establishing the following proposition %~\ref{prop_3}  
will imply Theorem~\ref{thm_1}.

\begin{proposition}[explicit formula] \label{prop_3}
Let $t \in \R$ and $\alpha>0$.  
Assuming RH, we have
\begin{equation} \label{EQ_401}
\aligned 
\Psi(x;1/2+it,\alpha) 
& = - e^{-\pi i/4} \sqrt{2\pi} \, Z(x;t,\alpha)  +A_5(t,\alpha)
+O\left(\frac{1}{\log x}\right)
\endaligned 
\end{equation}
as $x \to \infty$,  
where $A_5(t, \alpha)$ is a constant depending on $t$ and $\alpha$,  
and the implied constant may also depend on $t$ and $\alpha$.
\end{proposition}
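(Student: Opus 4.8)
The plan is to establish Proposition~\ref{prop_3} by applying the Riemann--von Mangoldt explicit formula for the Chebyshev function $\psi_0$ (normalised at its jumps) to the sum defining $\Psi(x;1/2+it,\alpha)$, and then evaluating the resulting sum over the nontrivial zeros of $\zeta(s)$ by means of Lemmas~\ref{lem_2}, \ref{lem_3} and~\ref{lem_4}. Write $s=1/2+it$ and
\[
F(u):=e(-\alpha u)\,u^{-s}\Bigl(1-\frac{\log u}{\log x}\Bigr)\qquad(1\le u\le x),
\]
extended by $0$ outside $[1,x]$, so that $\Psi(x;s,\alpha)=\int_{1^-}^{x}F(u)\,d\psi_0(u)$. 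Since $F(x)=0$ and $\psi_0(1)=0$, integration by parts gives $\Psi(x;s,\alpha)=-\int_{1}^{x}\psi_0(u)F'(u)\,du$. Substituting $\psi_0(u)=u-\sum_{\rho}u^{\rho}/\rho-\log 2\pi-\tfrac12\log(1-u^{-2})$ --- the interchange of $\sum_{\rho}$ with $\int_{1}^{x}$ being justified by dominated convergence, using the truncated explicit formula together with the bound $F'(u)\ll_{t,\alpha}u^{-1/2}$ on $[1,x]$ --- splits $\Psi$ into a ``polar'' piece $-\int_{1}^{x}uF'(u)\,du$, the zero sum $\sum_{\rho}\rho^{-1}\int_{1}^{x}u^{\rho}F'(u)\,du$, and the two further pieces coming from $-\log 2\pi$ and from $-\tfrac12\log(1-u^{-2})$ (the latter encoding the trivial zeros).

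A second integration by parts turns the polar piece into $e(-\alpha)+G(x;\alpha,-s)$; by the representation~\eqref{EQ_305} (valid for $\Re(z)>-1$, applied with $z=-s$) and the boundedness in $x$ of the Meijer $G$-term established in the proof of Lemma~\ref{lem_2}, one has $G(x;\alpha,-s)=E_{s}(2\pi i\alpha)+O_{t}(1/\log x)$, so this piece is a constant plus $O(1/\log x)$. The piece from $-\log 2\pi$ equals $(\log 2\pi)\int_{1}^{x}F'(u)\,du=-(\log 2\pi)e(-\alpha)$, a constant; and since $\log(1-u^{-2})\ll u^{-2}$ away from $u=1$, with only an integrable logarithmic singularity at $u=1$, while $F'(u)\ll_{t,\alpha}u^{-1/2}$, the piece from $-\tfrac12\log(1-u^{-2})$ converges to a constant with error $O(1/\log x)$. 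All of these go into $A_{5}(t,\alpha)$ and the error term. Expanding $F'(u)$ and recognising the resulting integrals as the functions $G(x;\alpha,\cdot)$ of Section~\ref{section_3}, the zero sum becomes
\[
-2\pi i\alpha\sum_{\rho}\frac{G(x;\alpha,\rho-s)}{\rho}\;-\;s\sum_{\rho}\frac{G(x;\alpha,\rho-s-1)}{\rho}\;-\;\frac{1}{\log x}\sum_{\rho}\frac{1}{\rho}\int_{1}^{x}e(-\alpha u)\,u^{\rho-s-1}\,du .
\]
Under RH, $\rho-s=i(\gamma-t)$, so the first sum is exactly of the shape treated by Lemmas~\ref{lem_3} and~\ref{lem_4}. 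A stationary-phase (equivalently, integration-by-parts) estimate shows $G(x;\alpha,\rho-s-1)\ll_{\alpha}(1+|\gamma-t|)^{-1/2}$ and likewise that the inner integral in the third sum is $\ll_{\alpha}(1+|\gamma-t|)^{-1/2}$; since $\sum_{\rho}|\gamma|^{-3/2}<\infty$, the second and third sums are absolutely convergent and contribute only a constant plus $O(1/\log x)$.

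It remains to evaluate $-2\pi i\alpha\sum_{\rho}G(x;\alpha,i(\gamma-t))/\rho$, which carries the main term. Zeros with $\gamma-t\le 0$ contribute only a constant plus $O(1/\log x)$: by~\eqref{EQ_309} the leading part of $G(x;\alpha,i(\gamma-t))$ there carries the factor $\exp(-\tfrac{\pi}{2}(|\gamma-t|-(\gamma-t)))=\exp(-\pi|\gamma-t|)$, while the remaining contributions to $G$ are $\ll(1+|\gamma-t|)^{-1}$, so the corresponding weighted sum converges; and the finitely many zeros with $0<\gamma-t\le M$, $M$ fixed, contribute $O(1)$. For $M<\gamma-t\le 2\pi\alpha x$ --- finitely many zeros for fixed $x$ --- I invoke Lemma~\ref{lem_3} and~\eqref{EQ_309}: there the leading part of $G(x;\alpha,i(\gamma-t))$ equals
\[
\frac{e^{\pi i/4}\sqrt{2\pi}\,(\gamma-t)}{2\pi i\alpha}\cdot\frac{1}{\sqrt{\gamma-t}}\,e\!\left(\frac{\gamma-t}{2\pi}\log\frac{\gamma-t}{2\pi\alpha e}\right)\left(1-\frac{1}{\log x}\log\frac{\gamma-t}{2\pi\alpha}\right)\bigl(1+O\bigl((\gamma-t)^{-1}\bigr)\bigr),
\]
whose second factor is exactly the summand of $Z(x;t,\alpha)$ in~\eqref{EQ_103}, with the mollifier and the summation range matching automatically; together with $(\gamma-t)/\rho=-i+O((\gamma-t)^{-1})$ and $\sum_{\rho}(1+|\gamma-t|)^{-3/2}<\infty$ this collapses the relevant portion of the sum to $\bigl(-e^{\pi i/4}\sqrt{2\pi}/(2\pi\alpha)\bigr)Z(x;t,\alpha)$ plus a constant plus $O(1/\log x)$, and multiplying by $-2\pi i\alpha$ and using $-ie^{\pi i/4}=e^{-\pi i/4}$ produces the term $-e^{-\pi i/4}\sqrt{2\pi}\,Z(x;t,\alpha)$ of~\eqref{EQ_401}. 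Finally, for $\gamma-t>2\pi\alpha x$ I use Lemma~\ref{lem_4}: splitting at $\gamma-t=2\pi\alpha x+\sqrt{2\pi\alpha x}$ and using that $\zeta$ has $\ll\log(2+T)$ zeros in each interval $(T,T+1]$, the two cases of~\eqref{EQ_311} bound this tail by $\ll x^{-1/2}+x^{-1}\log x=O(1/\log x)$. Collecting all contributions yields~\eqref{EQ_401}, and hence, with Propositions~\ref{prop_1} and~\ref{prop_2}, Theorem~\ref{thm_1}.

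The principal difficulty is the treatment of the sum over the nontrivial zeros of $\zeta(s)$, especially the passage between the three regimes $\gamma-t\le 0$, $0<\gamma-t\le 2\pi\alpha x$ and $\gamma-t>2\pi\alpha x$ via, respectively, exponential decay, Lemma~\ref{lem_3} and Lemma~\ref{lem_4}: in the transition window $\gamma-t\asymp 2\pi\alpha x$ neither asymptotic formula is sharp, and one must lean on the first estimate of~\eqref{EQ_311} together with the fact that only $\ll\sqrt{x}\log x$ zeros lie there. Secondary technical points are the justification of the interchange of summation over $\rho$ with integration, the absolute convergence (for each fixed $x$) of all the zero sums involved, and the verification that every ``lower-order'' piece is genuinely a constant plus $O(1/\log x)$ uniformly in $x$ --- in particular that the weights $1/\rho$ may be replaced by $-i/(\gamma-t)$ at the cost of only absolutely convergent error sums.
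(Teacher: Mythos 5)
Your argument is correct in substance and reaches \eqref{EQ_401} by the same two-stage strategy as the paper (explicit formula first, then the stationary-phase Lemmas~\ref{lem_3} and~\ref{lem_4} applied to $G(x;\alpha,i(\gamma-t))$ with the zeros split at $\gamma-t=2\pi\alpha x$ and at $2\pi\alpha x+\sqrt{2\pi\alpha x}$), but the first stage is genuinely different. You integrate $F(u)=e(-\alpha u)u^{-s}(1-\log u/\log x)$ against $d\psi_0$ and insert the classical untwisted formula $\psi_0(u)=u-\sum_\rho u^\rho/\rho-\log 2\pi-\tfrac12\log(1-u^{-2})$, which produces zero-sum weights $1/\rho$ together with two secondary zero sums (the $G(x;\alpha,\rho-s-1)$ sum and the $(\log x)^{-1}$ sum) that must then be shown to be a constant plus $O(1/\log x)$, and forces the conversion $1/\rho=-i/(\gamma-t)+O(|\gamma|^{-2})$ before the summands of $Z(x;t,\alpha)$ can be recognized. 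The paper instead (Lemma~\ref{lem_5}) rewrites the arithmetic weight via an $E_1$-integration by parts and applies the $s$-twisted explicit formula for $\sum_{n\le u}\Lambda(n)n^{-s}$ from Montgomery--Vaughan, obtaining in \eqref{EQ_402} a single zero sum with the weight $1/(\rho-s)=1/(i(\gamma-t))$ already in the form needed, plus one polar term $\frac{2\pi i\alpha}{1-s}G(x;\alpha,1-s)$ and one $\frac{\zeta'}{\zeta}(s)G(x;\alpha,0)$ term handled by Lemma~\ref{lem_2}. What your route buys is that only the most standard explicit formula is invoked; what it costs is the extra bookkeeping you flag yourself, and your justification of that bookkeeping is occasionally too coarse for the stated conclusion: saying the finitely many zeros with $0<\gamma-t\le M$ ``contribute $O(1)$'', or inferring ``constant plus $O(1/\log x)$'' for the secondary sums from absolute convergence alone, is strictly weaker than what \eqref{EQ_401} requires, since an $O(1)$ term oscillating in $x$ would destroy the shape $A_5(t,\alpha)+O(1/\log x)$. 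These points are repairable exactly as you indicate elsewhere: each bounded-height term tends to a limit with error $O(1/\log x)$ by \eqref{EQ_305}, and in the secondary sums the $x$-dependence sits either in the mollifier (carrying an explicit $1/\log x$ against a summable weight) or in truncation tails of size $O(x^{-1/2}\log x)$; your sign bookkeeping ($-2\pi i\alpha$ times $-e^{\pi i/4}\sqrt{2\pi}/(2\pi\alpha)$ giving $-e^{-\pi i/4}\sqrt{2\pi}$) and the tail estimate via \eqref{EQ_311} agree with the paper's \eqref{EQ_407}--\eqref{EQ_408}. Note also that your use of \eqref{EQ_305} at the complex exponent $z=-s$ goes slightly beyond the real range treated in Lemma~\ref{lem_2}, but the same absolute-convergence argument applies there (and the paper itself needs the analogous extension for $G(x;\alpha,1-s)$), so this is not a substantive gap.
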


By Proposition~\ref{prop_1}, the GRH for all $\chi \bmod q$  
is equivalent to the statement that  
the left-hand side of \eqref{EQ_401} satisfies the appropriate bound  
for all $\alpha = a/q_1$ with $1 \leq a \leq q_1$, $(a,q_1) = 1$, and $q_1 \mid q$.  
Hence, Propositions~\ref{prop_1} and \ref{prop_3} together yield  
\eqref{EQ_104}, which is the first part of Theorem~\ref{thm_1}.  
Furthermore, combining \eqref{EQ_0215} in Proposition~\ref{prop_2}  
with \eqref{EQ_401} in Proposition~\ref{prop_3}  
gives \eqref{EQ_105}, which is the second part of Theorem~\ref{thm_1}.  
Therefore, proving Proposition~\ref{prop_3}  
completes the proof of Theorem~\ref{thm_1}. 
\hfill $\Box$

\subsection{Auxiliary lemma} %for the proof of Proposition~\ref{prop_3} } 

\begin{lemma} \label{lem_5}
Let $\alpha > 0$, and let $s$ be a complex number with $\Re(s) > 0$, $s \neq 1$, and not coinciding with any nontrivial zero of $\zeta(s)$. Then 
\begin{equation} \label{EQ_402}
\aligned 
\Psi(x; s, \alpha)
& = \frac{2\pi i \alpha}{1-s}\, G(x; \alpha, 1-s) \\ 
& \quad 
- 2\pi i \alpha \left[ \sum_{\rho} \frac{1}{\rho - s} \, G(x; \alpha, \rho - s) 
+ \frac{\zeta'}{\zeta}(s) \, G(x; \alpha, 0) \right] \\[2mm]
& \quad 
+ A_6(s, \alpha) + O\!\left(\frac{1}{\log x}\right)
\endaligned 
\end{equation}
as $x \to \infty$,  
where $A_6(s, \alpha)$ is a constant depending on $s$ and $\alpha$,  
and the implied constant may also depend on $s$ and $\alpha$.
\end{lemma}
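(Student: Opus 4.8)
The plan is to start from the definition \eqref{EQ_109} of $\Psi(x;s,\alpha)$, rewrite the mollified sum $\sum_{n\le x}\Lambda(n)e(-\alpha n)n^{-s}(1-\log n/\log x)$ as a contour integral against the logarithmic derivative $-\zeta'/\zeta$, and then collapse that integral onto its poles via the explicit formula for $\zeta(s)$. Concretely, I would first record the Mellin-type identity
\[
1-\frac{\log n}{\log x}
=\frac{1}{\log x}\int_{n}^{x}\frac{du}{u}\cdot[n\le x]
\]
so that, after interchanging summation and integration,
\[
\Psi(x;s,\alpha)
=\frac{1}{\log x}\int_{1}^{x}\frac{1}{u}
\Bigl(\sum_{n\le u}\frac{\Lambda(n)e(-\alpha n)}{n^{s}}\Bigr)\,du .
\]
Then I would insert Perron's formula (or rather its smoothed/Riesz-mean form that is already implicit in \cite{Su24}, \cite{So09}) to express the inner partial sum of $\Lambda(n)e(-\alpha n)n^{-s}$ in terms of a vertical integral of $-(\zeta'/\zeta)(s+w)$ twisted by the additive character, and move the contour to the left, picking up the pole of $\zeta'/\zeta$ at $w=1-s$ with residue contributing the term $\frac{2\pi i\alpha}{1-s}G(x;\alpha,1-s)$, the poles at the nontrivial zeros $\rho$ contributing $-2\pi i\alpha\sum_\rho \frac{1}{\rho-s}G(x;\alpha,\rho-s)$, and the pole structure at $w=0$ (the value $-(\zeta'/\zeta)(s)$ times the relevant integral) contributing the term $-2\pi i\alpha\,\frac{\zeta'}{\zeta}(s)\,G(x;\alpha,0)$. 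The trivial zeros of $\zeta$ contribute a term that is absolutely summable and, after integration against $u^{-1}$ and division by $\log x$, gets absorbed into the constant $A_6(s,\alpha)$ and the $O(1/\log x)$ error — this is exactly the shape of \eqref{EQ_211}, so the bookkeeping mirrors the principal-character case there.

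The mechanism producing the factor $2\pi i\alpha$ and the function $G$ is the following: after the contour shift, each residue term is of the form $\mathrm{(coeff)}\cdot\frac{1}{\log x}\int_1^x u^{-1}\cdot u^{\,(\text{exponent})}\,du$-type expression twisted by $e(-\alpha u)$ arising from the additive character, and one recognizes
\[
\frac{1}{\log x}\int_{1}^{x}\frac{d}{du}\Bigl(e(-\alpha u)\,u^{z}\Bigr)\frac{du}{\,?\,}
\]
is not quite it; rather, integrating by parts in $u$ turns $\int_1^x e(-\alpha u)u^{z-1}\,du$ into $\frac{1}{z}[e(-\alpha u)u^{z}]_1^x+\frac{2\pi i\alpha}{z}\int_1^x e(-\alpha u)u^{z}\,du$, and the mollified version of the latter integral is precisely $G(x;\alpha,z)$ as defined in Section~\ref{section_3}. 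Carrying this out for $z=\rho-s$, $z=1-s$, and $z=0$ yields exactly the three groups of terms in \eqref{EQ_402}; the boundary contributions at $u=1$ are $x$-independent and go into $A_6(s,\alpha)$, while those at $u=x$ either cancel against the definition of $G$ or are $O(1/\log x)$ after division. I would organize this so that the algebra is done once, abstractly, for a single residue at a generic point $z$, and then applied three times.

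The main obstacle is justifying the interchange of the sum over zeros $\rho$ with the $u$-integral and with the contour shift, i.e. controlling the tail of $\sum_\rho \frac{1}{\rho-s}G(x;\alpha,\rho-s)$ uniformly in $x$. Here $\Re(\rho-s)$ is bounded (equal to $-\Re(s)$ under RH when $s=1/2+it$, but in the lemma $s$ is only assumed to have $\Re(s)>0$, so $\Re(\rho-s)\in(-\Re(s),1-\Re(s))$ in general and RH is not assumed in this lemma), so the relevant decay must come from the oscillation in the imaginary direction: for $\rho=\beta+i\gamma$ with $|\gamma|$ large, $G(x;\alpha,\rho-s)$ is essentially $G(x;\alpha,\beta-\sigma+i(\gamma-\tau))$ with $s=\sigma+i\tau$, and Lemmas~\ref{lem_2}, \ref{lem_3}, and \ref{lem_4} give $G(x;\alpha,iy)\ll |y|^{\Re+1/2}$-type bounds (sharpened near $y\approx 2\pi\alpha x$ by Lemma~\ref{lem_4}). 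Combined with $\frac{1}{\rho-s}\ll |\gamma|^{-1}$ and the zero-density estimate \eqref{EQ_212} — which gives $\sum_\rho|\rho|^{-1-\delta}<\infty$ — one gets absolute convergence of $\sum_\rho\frac{1}{\rho-s}G(x;\alpha,\rho-s)$ with a bound uniform in $x$, which is what is needed both to justify the termwise contour shift and to show the remaining pieces are genuinely $O(1/\log x)$. I expect the delicate range to be $|\gamma|$ comparable to $2\pi\alpha x$, precisely the range where the cruder Lemma~\ref{lem_3} fails and Lemma~\ref{lem_4} must be invoked; handling that range carefully, and checking that after multiplication by $(\log x)^{-1}$ the contribution of $\gamma$ near $2\pi\alpha x$ stays $O(1/\log x)$ rather than $O(1)$, is the crux of the argument.
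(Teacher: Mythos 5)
There is a genuine gap, and it sits at the center of your argument. You propose to apply Perron's formula to the inner sum $\sum_{n\le u}\Lambda(n)e(-\alpha n)n^{-s}$ with $-(\zeta'/\zeta)(s+w)$ as the underlying Dirichlet series and then collapse the contour onto poles at $w=1-s$, $w=\rho-s$, $w=0$. But the Dirichlet series attached to the twisted coefficients is $\sum_n \Lambda(n)e(-\alpha n)n^{-s-w}$, which is \emph{not} $-(\zeta'/\zeta)(s+w)$: the additive twist destroys the Euler product, and the analytic continuation and pole structure of this twisted series are unknown (indeed they are essentially what the whole paper is trying to access). Moreover, residues of a Perron integral in $w$ would produce pure powers $u^{1-s}$, $u^{\rho-s}$ with no factor $e(-\alpha u)$, so the functions $G(x;\alpha,z)$ and the factor $2\pi i\alpha$ cannot appear at that stage; your own hedging (``is not quite it; rather\dots'') signals the problem but the integration-by-parts patch does not repair the logical order. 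The missing idea, which is the paper's route, is to remove the additive twist from the arithmetic sum \emph{first}: integration by parts (via $E_1$) gives $e(-\alpha n)\bigl(1-\tfrac{\log n}{\log x}\bigr)=2\pi i\alpha\int_n^x e(-\alpha u)\bigl(1-\tfrac{\log u}{\log x}\bigr)du-\tfrac{1}{\log x}\bigl(E_1(2\pi i\alpha x)-E_1(2\pi i\alpha n)\bigr)$, whence $\Psi(x;s,\alpha)=2\pi i\alpha\int_1^x e(-\alpha u)\bigl(1-\tfrac{\log u}{\log x}\bigr)\sum_{n\le u}\Lambda(n)n^{-s}\,du+O(1/\log x)$; only then is the classical explicit formula \eqref{EQ_404} for the \emph{untwisted} partial sum inserted, and each of its terms integrates directly to the corresponding $\tfrac{1}{1-s}G(x;\alpha,1-s)$, $\tfrac{1}{\rho-s}G(x;\alpha,\rho-s)$, $\tfrac{\zeta'}{\zeta}(s)G(x;\alpha,0)$ in \eqref{EQ_402}, the Lerch/trivial-zero term yielding $A_6(s,\alpha)+O(1/\log x)$ as in \eqref{EQ_406}.

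Your justification of the interchange in the last paragraph is also not right for this lemma. Lemma~\ref{lem_5} does not assume RH, so $\rho-s$ need not be purely imaginary and Lemmas~\ref{lem_3} and \ref{lem_4} (stated for $G(x;\alpha,iy)$ with real $y$) do not apply; more importantly, the claim that $\sum_\rho\frac{1}{\rho-s}G(x;\alpha,\rho-s)$ converges absolutely ``with a bound uniform in $x$'' is false — under RH with $s=1/2+it$ this sum is precisely what produces $Z(x;t,\alpha)$ in Proposition~\ref{prop_3} and can be of size $x^{1/2}/\log x$ (Theorem~\ref{thm_3}); it is a main term that remains in \eqref{EQ_402}, not an error to be absorbed into $O(1/\log x)$. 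What actually requires justification is interchanging the conditionally convergent zero sum of \eqref{EQ_404} (taken as the symmetric limit over $|\Im\rho|\le T$) with the integral over $u\in[1,x]$, and the paper does this with no zero-location input: one writes $\sum_\rho\frac{u^{\rho-s}}{\rho-s}=u^{-s}\sum_\rho\frac{u^{\rho}}{\rho}+s\sum_\rho\frac{u^{\rho-s}}{\rho(\rho-s)}$, where the second sum converges absolutely and uniformly on compacta by \eqref{EQ_212}, and the first inherits the classical uniform/bounded convergence of $\sum_\rho u^{\rho}/\rho$ between and near prime powers. The estimates of Lemmas~\ref{lem_2}--\ref{lem_4} enter only later, in the proof of Proposition~\ref{prop_3}, where RH places $\rho-s$ on the imaginary axis.
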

\begin{proof}

By integration by parts, we have
\[
\aligned 
- 2 \pi i \alpha \ 
\int_{n}^{x} e(-\alpha u) \left( 1 - \frac{\log u}{\log x} \right) 
 \, du
& = -
e(-\alpha n) \left( 1 - \frac{\log n}{\log x} \right) 
+ \frac{1}{\log x}
\int_{n}^{x} \frac{e(-\alpha u)}{u} \, du. 
\endaligned 
\]
Using the integral formula of $E_s(z)$ for $s=1$ in \cite[(6.2.1), (8.19.2)]{NIST}, 
we obtain
\[
\int_{n}^{x} \frac{e(-\alpha u)}{u}  \, du 
= - E_1(2\pi i\alpha x) + E_1(2\pi i\alpha n) . 
\]
Therefore,
\[
\aligned 
e(-\alpha n) \left( 1 - \frac{\log n}{\log x} \right)  
& =2\pi i \alpha
\int_{n}^{x} e(-\alpha u) \left( 1 - \frac{\log u}{\log x} \right) 
 \, du \\
& \quad 
-  \frac{1}{\log x}(E_1(2\pi i \alpha x)-E_1(2\pi i \alpha n)).
\endaligned 
\]
Substituting this into \eqref{EQ_109} and 
interchanging the order of summation and integration, 
we obtain
\[
\aligned 
\Psi(x;s,\alpha) 
& = 2\pi i \alpha\int_{1}^{x} e(-\alpha u) \left( 1 - \frac{\log u}{\log x} \right)
 \sum_{n \leq u} \frac{\Lambda(n)}{n^s}
 \, du \\
& \quad 
- \frac{1}{\log x} \sum_{n \leq x} \frac{\Lambda(n)}{n^s}
(E_1(2\pi i\alpha x)-E_1(2\pi i\alpha n)). 
\endaligned 
\]
For the second term on the right-hand side, recall that for $|\arg z| < 3\pi/2$,
\[
E_1(z) = \frac{e^{-z}}{z}\big(1+O(|z|^{-1})\big)
\quad \text{as} \quad |z| \to \infty
\]
(\cite[(2.11.6)]{NIST}). 
Hence, 
\[
\aligned 
\sum_{n \leq x} \frac{\Lambda(n)}{n^s}(E_1(2\pi i\alpha x)-E_1(2\pi i\alpha n))
& \ll_\alpha \sum_{n \leq x} \frac{\Lambda(n)}{n^\sigma}\left(\frac{1}{x}+\frac{1}{n} \right) 
 \ll \sum_{n \leq x} \frac{\Lambda(n)}{n^{1+\sigma}} \ll_\sigma 1,
\endaligned 
\]
where $\sigma=\Re(s)$. 
Therefore,
\begin{equation} \label{EQ_403}
\Psi(x;s,\alpha) 
 = 2\pi i \alpha \int_{1}^{x} e(-\alpha u) \left( 1 - \frac{\log u}{\log x} \right)
 \sum_{n \leq u} \frac{\Lambda(n)}{n^s}
 \, du
+O\left(\frac{1}{\log x}\right), 
\end{equation} 
where the implied constant depends on $s$. 

To evaluate the integral on the right-hand side of \eqref{EQ_403},  
we recall the explicit formula
\begin{equation} \label{EQ_404}
\sideset{}{'}\sum_{n \leq x} \frac{\Lambda(n)}{n^s} 
= 
 \frac{x^{1-s}}{1-s} 
 - \sum_{\rho} \frac{x^{\rho-s}}{\rho-s} 
-\frac{\zeta'(s)}{\zeta(s)}
+ \frac{1}{2}u^{-s-2} \Phi(u^{-2},1,1+s/2)
\end{equation} 
for $x>1$ and $s \neq 1, \rho$~(\cite[Exercises 12.1.1, 4]{MV07}),  
where $\sideset{}{'}\sum_{n \leq x}a_n$ denotes  
$\sum_{n \leq x}a_n - \frac12 a_x$ when $x$ is a prime power,  
and $\sum_{n \leq x}a_n$ otherwise,  
the sum $\sum_\rho$ is taken over all nontrivial zeros of $\zeta(s)$ counted with multiplicity,  
and is defined as $\sum_\rho = \lim_{T \to \infty}\sum_{|\Im(\rho)| \leq T}$, 
and  $\Phi(z,s,\alpha)=\sum_{n=0}^{\infty}(n+\alpha)^{-s}z^n$  
is the Lerch transcendent.  
Using \eqref{EQ_404}, and proceeding formally, we obtain
\begin{equation} \label{EQ_405}
\aligned 
\int_{1}^{x}  & e(-\alpha u) \left( 1 - \frac{\log u}{\log x} \right)
 \sum_{n \leq u} \frac{\Lambda(n)}{n^s} \, du \\
& = - \int_{1}^{x}  e(-\alpha u)
\sum_{\rho} \frac{u^{\rho-s}}{\rho-s} 
\left( 1 - \frac{\log u}{\log x} \right)
\, du \\
& \quad 
+ \int_{1}^{x}  e(-\alpha u)
\left( \frac{u^{1-s}}{1-s} 
-\frac{\zeta'}{\zeta}(s) 
\right)
\left( 1 - \frac{\log u}{\log x} \right)
\, du \\
& \quad 
+ \frac{1}{2}\int_{1}^{x} e(-\alpha u)\,
u^{-s-2} \Phi(u^{-2},1,1+s/2) \left( 1 - \frac{\log u}{\log x} \right)
\,  du \\
& = \frac{1}{1-s}\,G(x; \alpha,1-s) \\
& \quad - \sum_{\rho} \frac{1}{\rho-s} G(x; \alpha,\rho-s) 
-\frac{\zeta'}{\zeta}(s)\,G(x; \alpha,0) \\
& \quad 
+\frac{1}{2}\int_{1}^{x}  e(-\alpha u)  
u^{-s-2} \Phi(u^{-2},1,1+s/2)\left( 1 - \frac{\log u}{\log x} \right)
\,  du.
\endaligned 
\end{equation} 
As noted after \cite[Theorem~12.5]{MV07}, 
the sum $\sum_{\rho} x^{\rho}/\rho$ 
converges uniformly for $x$ in any interval of the form $[n_1+\delta,\, n_2-\delta]$,  
where $n_1 < n_2$ are consecutive prime powers,  
and converges boundedly in a neighborhood of a prime power. 
From these facts, the sum $\sum_{\rho} x^{\rho-s}/(\rho-s)$  
in \eqref{EQ_404} has the same convergence property, because 
\[
\sum_{\rho} \frac{x^{\rho-s}}{\rho-s}
= 
x^{-s}\sum_{\rho} \frac{x^\rho}{\rho}
+ s \sum_{\rho}  \frac{x^{\rho-s}}{\rho(\rho-s)}
\]
and the second sum on the right-hand side converges absolutely and uniformly in $x$  
on any compact set. 
Therefore, the interchange of the order of integration and summation  
above is justified.

Furthermore, the integral on the right-hand side of \eqref{EQ_405} can be evaluated as follows
\begin{equation} \label{EQ_406}
\aligned 
 \int_{1}^{x}  & e(-\alpha u) 
u^{-s-2} \Phi(u^{-2},1,1+s/2)\left( 1 - \frac{\log u}{\log x} \right)
\,  du \\
& = 
\frac{1}{\log x}\int_{1}^{x} \frac{1}{t} 
\left[ \int_{1}^{t}  e(-\alpha u) 
u^{-s-2} \Phi(u^{-2},1,1+s/2)
\,  du \right] dt \\
& = 
 \frac{1}{\log x}\int_{1}^{x} \frac{1}{t} 
\left[ \int_{1}^{\infty}  e(-\alpha u) \,
u^{-s-2} \Phi(u^{-2},1,1+s/2)
\,  du + O(t^{-1-\sigma}) \right] dt \\
& = 
\int_{1}^{\infty} e(-\alpha u) 
u^{-s-2} \Phi(u^{-2},1,1+s/2)
\,  du + O\left(\frac{1}{\log x}\right), 
\endaligned 
\end{equation}
since $\Phi(u^{-2},1,1+s/2) \to 2/(2+s)$ as $u \to \infty$ by definition.  
Combining \eqref{EQ_403}, \eqref{EQ_405}, and \eqref{EQ_406},  
we obtain \eqref{EQ_402}.
\end{proof}

\subsection{Proof of Proposition~\ref{prop_3} }  

Assuming RH, writing the nontrivial zeros of $\zeta(s)$ 
as $\rho = 1/2 + i\gamma$, 
and setting $s = 1/2 + it$ with $t \in \mathbb{R}$ and $t \neq \gamma$, 
\[
\sum_\rho
\frac{1}{\rho-s} G(x; \alpha,\rho-s)
= 
\frac{1}{i}
\sum_\gamma 
\frac{1}{\gamma-t} G(x; \alpha,i(\gamma-t)). 
\] 
We divide the sum on the right-hand side into two parts.

First, consider the sum over $\gamma$ with $\gamma - t > 2\pi \alpha x$.  
For convenience, set $X = 2\pi \alpha x$.  
From the first bound in~\eqref{EQ_311} 
and the asymptotic formula $N(T)=(T/\pi)\log(T/(2\pi e))+O(\log T)$ 
for the number of nontrivial zeros of $\zeta(s)$ 
with $|\Im(s)| \leq T$ (\cite[Corollary 14.3]{MV07}), we obtain
\[
\aligned 
\sum_{X \leq \gamma -t  \leq X+\sqrt{X}} 
& \frac{1}{\gamma-t}G(x; \alpha,i(\gamma-t)) \\
& \ll_\alpha \frac{x}{\log x} 
\sum_{X \leq \gamma -t  \leq X+\sqrt{X}} 
\frac{1}{(\gamma-t)^2} \\
& \ll \frac{x}{\log x} \int_{X}^{X+\sqrt{X}} \frac{dN(y)}{y^2} 
\ll \frac{x}{\log x} \int_{X}^{X+\sqrt{X}} \frac{\log y}{y^2} \, dy \\
& \ll \frac{x}{\log x} \frac{\log X}{X\sqrt{X}}(1+O(X^{-1/2}))\ll \frac{1}{\sqrt{x}}. 
\endaligned 
\]
On the other hand, from the second bound in~\eqref{EQ_311}, we have
\[
\aligned 
\sum_{\gamma-t \geq X+\sqrt{X}} 
& \frac{1}{\gamma-t}G(x; \alpha,i(\gamma-t)) \\
& \ll_\alpha 
\sum_{\gamma-t \geq X+\sqrt{X}} \frac{1}{(\gamma-t)^2} +
\frac{x}{\log x}\sum_{\gamma-t \geq X+\sqrt{X}} \frac{1}{(\gamma-t)(\gamma-t-2\pi\alpha x)^2} 
 \\
& \ll 
\int_{X+\sqrt{X}}^\infty \frac{\log y}{y^2} \, dy\frac{x}{\log x} 
+ 
\int_{X+\sqrt{X}}^\infty \frac{\log y}{y(y-2\pi\alpha x)^2} \, dy \\
& =  O\left(\frac{\log x}{x}\right) + O\left(\frac{1}{\sqrt{x}}\right). 
\endaligned 
\]
Hence
\begin{equation} \label{EQ_407}
\sum_{\gamma-t > 2\pi \alpha x} \frac{1}{\gamma-t}G(x; \alpha,i(\gamma-t))
= O\left(\frac{1}{\sqrt{x}}\right). 
\end{equation}

Next, consider the sum over $\gamma$ with $\gamma - t \leq 2\pi \alpha x$. 
Here $\gamma - t \neq 0$ by assumption,  
and the convergence in~\eqref{EQ_212} allows us to proceed as follows.  
The first term on the right-hand side of~\eqref{EQ_308} decays rapidly 
for $y = \gamma - t < 0$ by~\eqref{EQ_309};  
for $y = \gamma - t > 0$, the contribution of the error term in~\eqref{EQ_309} is $O(|y|^{-1/2})$.  
Thus
\[
\aligned 
\sum_{\gamma-t \leq 2\pi \alpha x} 
& \frac{1}{\gamma-t}
(2\pi i\alpha)^{-1-i(\gamma-t)}\Gamma(1+i(\gamma-t)) \\
& \times \left( 1 + \frac{1}{\log x}\left( \log (2\pi i \alpha) - \frac{\Gamma'}{\Gamma}(1+i(\gamma-t)) \right) \right) 
\\
& =\frac{1}{2\pi i \alpha}\,e^{\pi i/4} \sqrt{2\pi}\,Z(x;t,\alpha) 
+ A_7(t,\alpha) + O\left( \frac{1}{\log x} \right)
\endaligned 
\]
for some constant $A_7(t,\alpha)$.  
Moreover, from the series representation of the confluent hypergeometric function ${}_1F_1(1; s; z)$,  
we have 
\[
{}_1F_1(1;2 + iy; 2\pi i \alpha) \ll_\alpha 1
\]
uniformly for $|y| \geq 1$. Hence
\[
\sum_{\gamma-t<2\pi \alpha x} \frac{1}{\gamma-t}\cdot
\frac{e^{-2\pi i\alpha}}{1+i(\gamma-t)} {}_1F_1(1;2 + i(\gamma-t); 2\pi i\alpha) 
= A_8(t;\alpha) + O\left( \frac{1}{\log x} \right) 
\]
for some constant $A_8(t,\alpha)$.  
The contribution of the error term in~\eqref{EQ_308} is also bounded by~\eqref{EQ_212},  
and therefore
\begin{equation} \label{EQ_408}
\aligned 
\sum_{\gamma-t<2\pi \alpha x} & \frac{1}{\gamma-t}\,G(x;\alpha,i(\gamma-t)) \\
& = \frac{1}{2\pi i \alpha}\,e^{\pi i/4} \sqrt{2\pi}\,Z(x;t,\alpha) 
+ A_9(t,\alpha) + O\left( \frac{1}{\log x} \right)
\endaligned 
\end{equation}
for some constant $A_9(t,\alpha)$. 

Applying the above estimates~\eqref{EQ_407} and~\eqref{EQ_408},  
together with~\eqref{EQ_301}, to~\eqref{EQ_402},  
we obtain~\eqref{EQ_401} of Proposition~\ref{prop_3}.
\hfill $\Box$

\subsection{Proof of Theorem \ref{thm_0724}}

Let $\chi \bmod q$ be a Dirichlet character  satisfying the assumptions of the theorem 
and let $\alpha = a/q$.  
Multiplying both sides of \eqref{EQ_401} by $\overline{\chi}(a)$ and summing over $1 \leq a \leq q$,  
we obtain
\begin{equation} \label{EQ_409}
\aligned 
\sum_{a=1}^q \bar{\chi}(a)
\Psi(x;1/2+it,a/q) 
& = - e^{-\pi i/4} \sqrt{2\pi} \sum_{a=1}^q \bar{\chi}(a) Z(x;t,a/q)  
+O\left(\frac{1}{\log x}\right)
\endaligned 
\end{equation}
by the orthogonality of Dirichlet characters. 
The left-hand side here is $\Psi(x;1/2+it,\chi)$ by \eqref{EQ_206},  
and hence, by the integral representation \eqref{EQ_207},  
the validity of \eqref{EQ_106} is equivalent to the GRH for $L(s,\chi)$.

Furthermore,  
multiplying both sides of \eqref{EQ_0215} by $\overline{\chi}(a)$ and summing over $1 \leq a \leq q$,  
we see that the left-hand side coincides with that of \eqref{EQ_409}.  
Comparing the right-hand sides with \eqref{EQ_409}, 
we obtain \eqref{EQ_107} and \eqref{EQ_108}. 
Note that $\tau(\chi_0)=\mu(q)$ and $m(t,\chi_0)=0$ by the assumption $t \not=\gamma$. 
\hfill $\Box$

\subsection{Comparison with Fujii's result} \label{subsection_Fujii}

The explicit formula \eqref{EQ_401} is analogous to the one obtained by Fujii under RH. 
He proved that 
\begin{equation} \label{EQ_410}
\aligned 
\sum_{n \leq x} \Lambda(n)e(-\alpha n)
& = - e^{-\pi i/4} \frac{1}{\sqrt{\alpha}} 
\sum_{0<\gamma \leq 2\pi \alpha x} 
e\left(\frac{\gamma}{2\pi} \log \frac{\gamma}{2\pi e \alpha}\right) \\
& \quad + O\left(
\sqrt{x}\left(
\frac{\log x}{\log\log x}
\right)^2
\right)
\endaligned 
\end{equation}
as $x \to \infty$ for any fixed positive $\alpha$.  
(This was first announced in~\cite[Corollary~1]{Fu92}; however, compared with the theorem in the same paper, it contains a misprint, which is corrected in the fourth line of p.~229 in~\cite{Fu99}.)  

Using the explicit formula \eqref{EQ_410} together with the orthogonality of Dirichlet characters, and arguing in the same manner as in the proof of Theorem~\ref{thm_0724}, one finds that if $\tau(\chi) \ne 0$ (this condition is missing in Fujii's statement), 
then the GRH for $L(s,\chi)$ is equivalent to  
\[
\sum_{a=1}^q
\frac{\overline{\chi(a)}}{\sqrt{a/q}} 
\sum_{0<\gamma \leq 2\pi (a/q) x} 
e\left(\frac{\gamma}{2\pi} \log \frac{\gamma}{2\pi (a/q) e}\right) 
= O\left( x^{1/2+\varepsilon} \right) 
\]
as $x \to \infty$ for any $\varepsilon>0$ 
(cf. \cite[Proof of Theorem 2]{Fu88b}).  
However, unlike the latter part of Theorem~\ref{thm_0724}, one cannot detect zeros of $L(s,\chi)$ via the nontrivial zeros of $\zeta(s)$ in this setting.

\section{Proof of Theorem \ref{thm_3} } \label{section_5}

First, by partial summation, we have  
\[
\aligned 
\Psi(x; s, \alpha)
& = 
\sum_{n \leq x} \Lambda(n)e(-\alpha n) \frac{1}{n^s}
- \frac{1}{\log x}
\sum_{n \leq x} \Lambda(n)e(-\alpha n) \frac{\log n}{n^s}
\\
& = x^{-s}
\sum_{n \leq x} \Lambda(n)e(-\alpha n) 
+ s \int_{1}^{x} \left[ \sum_{n \leq u} \Lambda(n)e(-\alpha n) \right] u^{-s-1} \, du \\
& \quad 
- x^{-s}
\sum_{n \leq x} \Lambda(n)e(-\alpha n) 
+ \frac{1}{\log x} \int_{1}^{x} \left[ \sum_{n \leq u} \Lambda(n)e(-\alpha n) \right] 
u^{-s-1} \, du \\
& \quad 
- \frac{s}{\log x} \int_{1}^{x} \left[ \sum_{n \leq u} \Lambda(n)e(-\alpha n) \right] 
u^{-s-1}\log u \, du \\
& = 
\frac{1}{\log x} \int_{1}^{x} \left[ \sum_{n \leq u} \Lambda(n)e(-\alpha n) \right] 
u^{-s-1} \, du \\
& \quad 
+ \int_{1}^{x} \left[ \sum_{n \leq u} \Lambda(n)e(-\alpha n) \right] 
\left(
1 - \frac{\log u}{\log x}
\right)
u^{-s-1} \, du.
\endaligned 
\]
For the integrand on the right-hand side, 
if $|\alpha-a/q| \leq q^{-2}$ with $(a,q)=1$, then
\[
\sum_{n \leq x} \Lambda(n)e(-\alpha n) 
\ll \left( xq^{-1/2} + x^{4/5} + x^{1/2}q^{1/2} \right)(\log x)^4
\]
according to \cite[\S25, (2)]{Da80}. 
If $\alpha \in \R \setminus \Q$, this condition is satisfied for infinitely many rationals $a/q$.  
Choosing $q$ so that
\[
(\log x)^A \leq q \leq \frac{x}{(\log x)^A}, \quad A>8,
\]
we obtain
\[
\sum_{n \leq x} \Lambda(n)e(-\alpha n) 
\ll x (\log x)^{-A/2+4} = o(x).
\]
Using this bound, we find
\[
\aligned 
\int_{1}^{x} & \left[ \sum_{n \leq u} \Lambda(n)e(-\alpha n) \right] 
\left(
1 - \frac{\log u}{\log x}
\right)
u^{-s-1} \, du \\
& = \frac{1}{\log x} 
\int_{1}^{x} \frac{1}{t} 
\left( \int_{1}^{t} \left[ \sum_{n \leq u} \Lambda(n)e(-\alpha n) \right] 
u^{-s-1} \, du \right)  dt \\
& = o\left(\frac{1}{\log x} 
\int_{1}^{x} \frac{1}{t} 
 \int_{1}^{t} u^{-\sigma} \, du \,  dt \right) \\
& = o\left(\frac{1}{\log x} 
\left[ 
\frac{x^{1-\sigma}}{(1-\sigma)^2} + \frac{\log x}{\sigma-1} - \frac{1}{(1-\sigma)^2}
\right] \right)
 = o\left(\frac{x^{1-\sigma}}{\log x}  \right).
\endaligned 
\]
Hence, for $\alpha \in \R \setminus \Q$,
\begin{equation} \label{EQ_501}
\Psi(x;s,\alpha) = o\!\left(\frac{x^{1-\sigma}}{\log x}  \right).
\end{equation}

On the other hand, in the case $\alpha = a/q \in \Q$ with $(a,q)=1$, 
we use \eqref{EQ_205} for $q_1 = q$ together with 
the explicit formulas \eqref{EQ_209} and \eqref{EQ_211}.  
For the contribution of the nontrivial zeros in \eqref{EQ_211}, using the classical zero-free region 
$\beta > 1 - c/\log|\gamma|$, 
we have
\[
\sum_{\rho} \frac{x^{\rho-s}}{(\rho-s)^2}
\ll x^{1-\sigma} \sum_{\gamma>0} \frac{x^{-c/\log \gamma}}{\gamma^2}
\ll x^{1-\sigma} \int_{1}^{\infty} \frac{x^{-c/\log \gamma}}{\gamma^{1+\delta}} \, d\gamma.
\]
With the change of variables
\[
t = \sqrt{\frac{\delta}{c}} \, \frac{\log \gamma}{\sqrt{\log x}},
\]
and using the integral representation of the $K$–Bessel function
\[
K_{s}(z) = \frac{1}{2}\int_{0}^{\infty}
\exp\!\left(-\frac{z}{2}\left(t+\frac{1}{t}\right)\right) t^{s-1} \, dt,
\]
we obtain
\[
\aligned 
\int_{1}^{\infty}\frac{x^{-c/\log \gamma}}{\gamma^{1+\delta}} \, d\gamma
& = \sqrt{\frac{c}{\delta}} \sqrt{\log x}
\int_{0}^{\infty} \exp\!\left(
\sqrt{c\delta \log x}
\left(t+\frac{1}{t}\right)
\right) \,dt \\
& = 2\sqrt{\frac{c}{\delta}} \sqrt{\log x} \, K_1\!\left(2\sqrt{c\delta \log x}\right).
\endaligned 
\]
By the asymptotic formula
\[
K_s(z) = \sqrt{\frac{\pi}{2z}} \, e^{-z} \left(1+O(|z|^{-1})\right),
\]
this yields
\[
\sum_{\rho} \frac{x^{\rho-s}}{(\rho-s)^2}
\ll x^{1-\sigma} \exp(-c\sqrt{\log x})
\]
for some $c>0$.

Similarly, if $q \leq (\log x)^A$, applying the Siegel–Walfisz theorem  
(\cite[\S22]{Da80}) to \eqref{EQ_209}, 
the contribution of zeros can be handled in the same manner, giving
\begin{equation} \label{EQ_502}
\Psi(x;s,a/q) 
= \frac{x^{1-s}}{(1-s)^2\log x}\,(1+o(1)).
\end{equation}
Combining \eqref{EQ_501} and \eqref{EQ_502} with the explicit formula \eqref{EQ_401}, we obtain Theorem~\ref{thm_3}.
\hfill $\Box$

\section{Analogue of Theorem \ref{thm_0724}} \label{section_6} 

Fujii~\cite{Fu92} established a formula in which the sum on the right-hand side of the explicit formula~\eqref{EQ_410} is replaced by a sum over the nontrivial zeros of a Dirichlet $L$-function. 
In a similar manner, the sum on the right-hand side of~\eqref{EQ_401} 
can also be replaced by a sum over the nontrivial zeros of a Dirichlet $L$-function.  
Accordingly, for \eqref{EQ_105} in Theorem~\ref{thm_1} and 
\eqref{EQ_107} in Theorem~\ref{thm_0724}, one may likewise consider an analogue in which the right-hand side is replaced by the nontrivial zeros of $L(s,\psi)$ associated with a primitive Dirichlet character~$\psi$.

\begin{theorem} \label{thm_4}
Let $\psi$ be a primitive Dirichlet character modulo $k$ ($\geq 3$).  
Assume the GRH for $L(s,\psi)$, and set 
\[
Z_\psi(x;t,\alpha)
:= 
\sum_{0<\gamma_\psi - t \leq 2\pi \alpha x} 
\frac{1}{\sqrt{\gamma_\psi-t}}\,
e\!\left(\frac{\gamma_\psi-t}{2\pi}\log \frac{\gamma_\psi-t}{2\pi \alpha e} \right)  
\left(1 - \frac{1}{\log x}\log\frac{\gamma_\psi-t}{2\pi \alpha}\right), 
\]
where the sum is taken over all imaginary parts $\gamma_\psi$ 
of the nontrivial zeros $\rho_\psi = 1/2 + i\gamma_\psi$ of $L(s,\psi)$, counted with multiplicity. 
Let $t$ be a real number that is not equal to any $\gamma_\psi$.
\begin{enumerate}
\item Assuming  the GRH for all $L(s,\psi\chi)$ 
with Dirichlet characters $\chi \bmod q$, we have 
\begin{equation} \label{EQ_601}
\aligned  
Z_\psi(x;t,a/q_1) 
& = -\frac{e^{\pi i/4}\overline{\tau(\chi_1)}\,\chi_1(a)\delta(\chi_1)}{\sqrt{2\pi}\varphi(q_1)}\cdot 
 \frac{1}{(1/2 - it)^2} \cdot \frac{x^{1/2 - it}}{\log x}  \\
& \quad +\frac{e^{\pi i/4}}{2\sqrt{2\pi}\,\varphi(q_1)} 
\sum_{\chi \bmod q_1} \overline{\tau(\chi)}\,\chi(a)\,m(t, \psi\chi)
 \log x 
+ O(1)
\endaligned 
\end{equation}
as $x \to \infty$, for all integers $a$ with $1 \leq a < q_1$, $(a,q_1) = 1$, and $q_1 \mid q$,  
where $\chi_1 \bmod q_1$ is a Dirichlet character such that $\psi\chi_1$  
is the principal character modulo ${\rm lcm}(k,q)$.  
If such a $\chi_1$ exists, we set $\delta(\chi_1) = 1$;  
otherwise, we set $\delta(\chi_1) = 0$.
\item Let $\chi \bmod q$ ($\geq 1$) be a non-principal Dirichlet character  
such that $\tau(\chi) \ne 0$. 
Assuming the GRH for $L(s,\psi\chi)$, we have
\begin{equation} \label{EQ_602}
\sum_{a=1}^{q} \bar{\chi}(a)Z_\psi(x;t,a/q) 
 = \frac{e^{\pi i/4}\overline{\tau(\chi)}}{2\sqrt{2\pi}} \, m(t, \psi\chi) \log x
+ O(1)
\end{equation}
as $x \to \infty$. 
\end{enumerate}
\end{theorem}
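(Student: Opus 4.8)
The plan is to mirror, for the pair $(\psi,L(s,\psi))$, the two-approach strategy used for $\zeta(s)$ in Sections~\ref{section_1st_method}--\ref{section_4}. The central object is the twisted sum
\[
\Psi(x;s,\psi,\alpha):=\sum_{n\le x}\frac{\Lambda(n)\psi(n)e(-\alpha n)}{n^{s}}\left(1-\frac{\log n}{\log x}\right),
\]
which I would evaluate in two ways. In the first, the explicit formula for $L(s,\psi)$ is used to express $\Psi(x;1/2+it,\psi,\alpha)$ in terms of $Z_\psi(x;t,\alpha)$; in the second, for $\alpha=a/q_1$ the orthogonality of characters modulo $q_1$ expresses $\Psi(x;s,\psi,a/q_1)$ through the logarithmic derivatives $\frac{L'}{L}(s,\psi\chi)$ and the orders $m(t,\psi\chi)$. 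Comparing the two evaluations gives \eqref{EQ_601}, and \eqref{EQ_602} follows from one further application of orthogonality.

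\textbf{The explicit-formula side.} First I would establish the analogue of Lemma~\ref{lem_5} and Proposition~\ref{prop_3}: for $t\in\R$ not equal to any $\gamma_\psi$ and $\alpha>0$, assuming the GRH for $L(s,\psi)$,
\[
\Psi(x;1/2+it,\psi,\alpha)=-e^{-\pi i/4}\sqrt{2\pi}\,Z_\psi(x;t,\alpha)+A+O\!\left(\frac{1}{\log x}\right)
\]
for a constant $A=A(t,\alpha,\psi)$. The proof of Lemma~\ref{lem_5} transfers with $\Lambda(n)$ replaced by $\Lambda(n)\psi(n)$ and with \eqref{EQ_404} replaced by the corresponding explicit formula for $\sum_{n\le u}\Lambda(n)\psi(n)n^{-s}$ from \cite[Ch.~12]{MV07}; the only structural change is that $L(s,\psi)$ is entire, so the pole term $\frac{2\pi i\alpha}{1-s}G(x;\alpha,1-s)$ of \eqref{EQ_402} simply does not occur. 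One then splits $\sum_{\rho_\psi}\frac{1}{\rho_\psi-s}G(x;\alpha,\rho_\psi-s)$ exactly as in the proof of Proposition~\ref{prop_3}, using the zero-counting estimate $N_\psi(T)=\frac{T}{\pi}\log\frac{kT}{2\pi e}+O(\log(kT))$ and the convergence of $\sum_{\rho_\psi}|\rho_\psi|^{-1-\delta}$. The key point is that Lemmas~\ref{lem_2}, \ref{lem_3} and \ref{lem_4} concern the integral $G(x;\alpha,z)$ alone and therefore apply verbatim: the range $\gamma_\psi-t>2\pi\alpha x$ contributes $O(x^{-1/2})$ by Lemma~\ref{lem_4}, while the range $0<\gamma_\psi-t\le 2\pi\alpha x$ contributes $\tfrac{1}{2\pi i\alpha}e^{\pi i/4}\sqrt{2\pi}\,Z_\psi(x;t,\alpha)$ plus a constant plus $O(1/\log x)$ by Lemmas~\ref{lem_2} and \ref{lem_3}.

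\textbf{The $L$-function side.} Next, for $\alpha=a/q_1$ with $(a,q_1)=1$ and $q_1\mid q$, I would use \eqref{EQ_203} to write $\psi(n)e(-an/q_1)=\frac{1}{\varphi(q_1)}\sum_{\chi\bmod q_1}\overline{\tau(\chi)}\chi(a)(\psi\chi)(n)$ whenever $(an,q_1)=1$, separating off the finitely many prime powers $n\mid q_1$ exactly as in \eqref{EQ_202} (their total contribution is a constant plus $O(1/\log x)$). This yields
\[
\Psi(x;s,\psi,a/q_1)=\frac{1}{\varphi(q_1)}\sum_{\chi\bmod q_1}\overline{\tau(\chi)}\chi(a)\,\Psi(x;s,\psi\chi)+A'+O\!\left(\frac{1}{\log x}\right),
\]
with $\Psi(x;s,\psi\chi)$ as in \eqref{EQ_204}. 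For each $\chi$ with $\psi\chi$ non-principal I would apply \eqref{EQ_208}--\eqref{EQ_209} (with $\chi$ replaced by $\psi\chi$), which at $s=1/2+it$ gives $-\tfrac12 m(t,\psi\chi)\log x$ plus a constant plus $O(1/\log x)$ under the GRH for $L(s,\psi\chi)$; for the unique $\chi=\chi_1$, if it exists, for which $\psi\chi_1$ is principal modulo $\mathrm{lcm}(k,q)$, I would instead use \eqref{EQ_210}--\eqref{EQ_211}, which produces the main term $\frac{x^{1/2-it}}{(1/2-it)^2\log x}$ together with $-\tfrac12 m(t,\psi\chi_1)\log x$ and further constants. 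Substituting these into the display above and combining with the explicit-formula side gives \eqref{EQ_601}. Part~(2) then follows by taking $q_1=q$, multiplying \eqref{EQ_601} by $\overline{\chi}(a)$, and summing over $1\le a\le q$: orthogonality isolates the single term $\psi\chi$, and since $\psi\chi$ is non-principal it also removes the main term, leaving \eqref{EQ_602}.

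\textbf{What I expect to be hard.} The transcendental analysis is essentially a rerun of Sections~\ref{section_3} and \ref{section_4}: Lemmas~\ref{lem_2}--\ref{lem_4} and the proof of Lemma~\ref{lem_5} need only cosmetic changes. The genuinely new work is the arithmetic bookkeeping on the $L$-function side — determining precisely when a $\chi_1\bmod q_1$ with $\psi\chi_1$ principal modulo $\mathrm{lcm}(k,q)$ exists, checking that it is then unique, and verifying that stripping the finitely many Euler factors attached to such a $\psi\chi_1$ (as in \eqref{EQ_210}) costs only $O(1/\log x)$ — together with confirming that each auxiliary quantity $A,A'$ above is genuinely independent of $x$, just as in Sections~\ref{section_1st_method} and \ref{section_4}. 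As in the remark following Theorem~\ref{thm_0724}, the case $t=\gamma_\psi$ is left aside.
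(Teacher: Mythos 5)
Your route coincides with the paper's own (sketched) proof: introduce the twisted sum $\Psi_\psi(x;s,\alpha)=\sum_{n\le x}\Lambda(n)\psi(n)e(-\alpha n)n^{-s}(1-\log n/\log x)$, prove the analogue of Proposition~\ref{prop_3} from the explicit formula for $\sum_{n\le x}\Lambda(n)\psi(n)n^{-s}$ (no pole term, since $L(s,\psi)$ is entire) together with the unchanged Lemmas~\ref{lem_2}--\ref{lem_4} and the zero-counting/convergence facts for $L(s,\psi)$, and expand $\Psi_\psi(x;s,a/q_1)$ by the twisted orthogonality identity \eqref{EQ_603}--\eqref{EQ_604} and the formulas \eqref{EQ_208}--\eqref{EQ_211} of Section~\ref{section_1st_method}; your treatment of the prime powers dividing $q_1$ and of the constants is, if anything, more careful than the paper's sketch, and your main-term bookkeeping reproduces \eqref{EQ_601} exactly.

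The one place where your argument does not match the statement is part (2). You obtain \eqref{EQ_602} by multiplying \eqref{EQ_601} by $\overline{\chi}(a)$ and summing over $a$, but \eqref{EQ_601} was established under the GRH for \emph{all} $L(s,\psi\chi')$ with $\chi'\bmod q$, whereas part (2) assumes the GRH only for the single $L(s,\psi\chi)$. The repair is the same device used in the proof of Theorem~\ref{thm_0724}: sum the explicit-formula relation (the analogue of \eqref{EQ_401}, i.e.\ \eqref{EQ_605}) against $\overline{\chi}(a)$ \emph{before} inserting the explicit formulas for the individual twists; by \eqref{EQ_604} and orthogonality the left-hand side collapses to $\overline{\tau(\chi)}\,\Psi(x;1/2+it,\psi\chi)$ plus admissible errors, and only at that point is the GRH for $L(s,\psi\chi)$ invoked via \eqref{EQ_208}--\eqref{EQ_209}. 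Relatedly, your phrase ``since $\psi\chi$ is non-principal it also removes the main term'' uses a hypothesis that is not among those stated: $\chi$ non-principal does not prevent $\psi\chi$ from being principal modulo $\mathrm{lcm}(k,q)$ (for instance $\chi=\bar\psi$ when $q=k$), and in that case the term $x^{1/2-it}/((1/2-it)^2\log x)$ coming from \eqref{EQ_210}--\eqref{EQ_211} survives the summation over $a$, so one must either exclude $\chi=\chi_1$ or carry that extra main term in \eqref{EQ_602}; the paper's sketch is silent on this point, so it is worth making the restriction explicit in your write-up.
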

\begin{proof} 
The proof is similar to those of Theorems~\ref{thm_1} and \ref{thm_0724}, so we only give a sketch.  
Instead of \eqref{EQ_203}, we use the identity obtained 
by multiplying both sides of \eqref{EQ_203} by $\psi(n)$:
\begin{equation} \label{EQ_603}
\psi(n) e(-an/q)
 = \frac{1}{\varphi(q)} \sum_{\chi} \overline{\tau(\chi)} \chi(a) \, \psi\chi(n),
\end{equation}
where the product $\psi \chi$ is a Dirichlet character modulo ${\rm lcm}(k,q)$.
In place of \eqref{EQ_109},  we define  
\[
\Psi_\psi(x; s, \alpha)
:=
\sum_{n \leq x} \frac{\Lambda(n)\psi(n)e(-\alpha n)}{n^s}
\left( 1 - \frac{\log n}{\log x} \right).
\]
Then, by \eqref{EQ_603},  
\begin{equation} \label{EQ_604}
\Psi_\psi(x; s, a/q)
= \frac{1}{\varphi(q)} \sum_\chi \overline{\tau(\chi)}\chi(a)
\Psi(x; s, \psi\chi).
\end{equation}
On the other hand, in place of the explicit formula \eqref{EQ_404}, 
we use  
\[
\sideset{}{'}\sum_{n \leq x}\frac{\Lambda(n)\psi(n)}{n^s}
=
- \frac{L^\prime}{L}(s,\psi) 
- \sum_{\rho_\psi} \frac{x^{\rho_\psi-s}}{\rho_\psi-s} 
+ \sum_{k=0}^{\infty} \frac{x^{-2k-\kappa(\psi)-s}}{2k+\kappa(\psi)+s}
\]
for $x>1$, $s \neq \rho_\chi, -2k-\kappa(\psi)$ ($k \in \Z_{\geq 0}$) \cite[(6)]{Y91}.  
Then, as an analogue of \eqref{EQ_401}, 
\begin{equation} \label{EQ_605}
\aligned 
\Psi_\psi(x;1/2+it,\alpha) 
& = - e^{-\pi i/4} \sqrt{2\pi} \, Z_\psi(x;t,\alpha)  
+O(1)
\endaligned 
\end{equation}
if $t$ is not the ordinate of any nontrivial zero of $L(s,\psi)$. 
Combining \eqref{EQ_604} and \eqref{EQ_605}, and using the explicit formulas for $\Psi(x; s, \chi)$ given in Section~\ref{section_1st_method}, we obtain \eqref{EQ_601}.  

Furthermore, by decomposing \eqref{EQ_601} via the orthogonality of characters, we have  
\[
\overline{\tau(\chi)}\Psi(x; s, \psi\chi)
= - e^{-\pi i/4} \sqrt{2\pi} \sum_{a=1}^{q} \overline{\chi}(a) Z_\psi(x; t, \alpha) + O(1).
\]
Therefore, as an analogue of \eqref{EQ_107} in Theorem~\ref{thm_0724}, we obtain \eqref{EQ_602}.
\end{proof}

Applying \eqref{EQ_602} to $\chi=\bar{\psi}$, we obtain  
\begin{equation} \label{EQ_606}
\sum_{a=1}^{q} \psi(a)Z_\psi(x;t,a/q) 
 = \frac{e^{\pi i/4}\overline{\tau(\bar{\psi})}}{2\sqrt{2\pi}} \, m(t, \chi_0 ) \log x
+ O(1), 
\end{equation}
where $\chi_0$ denotes the principal character modulo $k$. 
Since $m(t,\chi_0)$ is precisely the order of $\zeta(s)$ at $s = 1/2 + it$,  
%Since $m(t,\chi_0)$ is nothing but the order of $\zeta(s)$ at $s = 1/2 + it$,  
\eqref{EQ_606} states that the nontrivial zeros of $\zeta(s)$ can be detected  
via the zeros of the Dirichlet $L$-function $L(s,\psi)$.  
The symmetry between \eqref{EQ_107} and \eqref{EQ_606} is evident. 
Furthermore, 
we can derive a relation between the nontrivial zeros of the Dirichlet $L$-functions 
associated with primitive Dirichlet characters $\psi \bmod q$ and $\chi \bmod q$ 
by substituting $\bar{\psi}\chi$ for $\chi$ in \eqref{EQ_606}. 
\medskip

\noindent
{\bf Acknowledgments}~
This work was supported by JSPS KAKENHI  Grant Number JP23K03050. 
%This work was also supported by the Research Institute for Mathematical Sciences,
%an International Joint Usage/Research Center located in Kyoto University.
%
%--------------------------------------

%---------------------------------------
%
\bigskip 

\noindent
Masatoshi Suzuki,\\[5pt]
Department of Mathematics, \\
Institute of Science Tokyo \\
2-12-1 Ookayama, Meguro-ku, \\
Tokyo 152-8551, Japan  \\[2pt]
Email: {\tt msuzuki@math.sci.isct.ac.jp}

%.......................................
\end{document}